\documentclass[english,leqno]{elsarticle}

\pdfoptionpdfminorversion=7
\usepackage{amsthm}
\usepackage{amsmath}
\usepackage{amssymb}
\usepackage{microtype}
\usepackage{enumerate}
\usepackage{hyperref}
\usepackage{todonotes}

\makeatletter
\newtheorem{thm}{Theorem}
\newtheorem{lemma}[thm]{Lemma}

\theoremstyle{definition}

\newsavebox{\fmbox}

\newlength{\dyindent}
\setlength{\dyindent}{0.4in}
{\end{list}}

\newenvironment{dy*}{\begin{list}{}%
{\setlength{\leftmargin}{\dyindent}\setlength{\labelwidth}{\dyindent}%
\addtolength{\labelwidth}{-\labelsep}}%
\item}%
{\end{list}}

\newcommand{\nullity}{\mbox{nullity}}

\begin{document}

\begin{frontmatter}
\title{Two-connected signed graphs with maximum nullity at most two}

\author[gsu]{Marina Arav}

\author[gsu]{Frank J. Hall}

\author[gsu]{Zhongshan Li}

\author[gsu]{Hein van der Holst\corref{cor}\fnref{fn}}

\cortext[cor]{Corresponding author, E-mail: hvanderholst@gsu.edu}

\fntext[fn]{Project sponsored by the National Security Agency under Grant Number H98230-14-1-0152. The United States Government is authorized to reproduce and distribute reprints notwithstanding any copyright notation herein.}

\address[gsu]{Department of Mathematics and Statistics \\
Georgia State University \\
Atlanta, GA 30303, USA
}

\begin{abstract}
A signed graph is a pair $(G,\Sigma)$, where $G=(V,E)$ is a graph (in which parallel edges are permitted, but loops are not) with $V=\{1,\ldots,n\}$ and $\Sigma\subseteq E$. The edges in $\Sigma$ are called odd and the other edges of $E$ even. By $S(G,\Sigma)$ we denote the set of all symmetric $n\times n$ matrices $A=[a_{i,j}]$ with $a_{i,j}<0$ if $i$ and $j$ are adjacent and connected by only even edges, $a_{i,j}>0$ if $i$ and $j$ are adjacent and connected by only odd edges, $a_{i,j}\in \mathbb{R}$ if $i$ and $j$ are connected by both even and odd edges, $a_{i,j}=0$ if $i\not=j$ and $i$ and $j$ are non-adjacent, and $a_{i,i} \in \mathbb{R}$ for all vertices $i$. The parameters $M(G,\Sigma)$ and $\xi(G,\Sigma)$ of a signed graph $(G,\Sigma)$ are the largest nullity of any matrix $A\in S(G,\Sigma)$ and the largest nullity of any matrix $A\in S(G,\Sigma)$  that has the Strong Arnold Property, respectively. In a previous paper, we gave a characterization of signed graphs $(G,\Sigma)$ with $M(G,\Sigma)\leq 1$ and of signed graphs with $\xi(G,\Sigma)\leq 1$. In this paper, we characterize the $2$-connected signed graphs $(G,\Sigma)$ with $M(G,\Sigma)\leq 2$ and the $2$-connected signed graphs $(G,\Sigma)$ with $\xi(G,\Sigma)\leq 2$.
\end{abstract}

\begin{keyword}
signed graph\sep nullity\sep symmetric
\MSC 05C22\sep 05C50\sep 15A03
\end{keyword}
\end{frontmatter}
\section{Introduction}
A \emph{signed graph} is a pair $(G,\Sigma)$, where $G=(V,E)$ is a graph (in which parallel edges are permitted, but loops are not)  and $\Sigma\subseteq E$. (We refer to \cite{Diestel} for the notions and concepts in graph theory.) The edges in $\Sigma$ are called \emph{odd} and the other edges \emph{even}. If $V=\{1,2,\ldots,n\}$, we denote by $S(G,\Sigma)$ the set of all real symmetric $n\times n$ matrices $A=[a_{i,j}]$ with
\begin{itemize}
\item $a_{i,j} < 0$ if $i$ and $j$ are adjacent and all edges between $i$ and $j$ are even, 
\item $a_{i,j}>0$ if $i$ and $j$ are adjacent and all edges between $i$ and $j$ are odd, 
\item $a_{i,j}\in \mathbb{R}$ if $i$ and $j$ are connected by odd and even edges,
\item $a_{i,j}=0$ if $i\not=j$ and $i$ and $j$ are non-adjacent, and 
\item $a_{i,i} \in \mathbb{R}$ for all vertices $i$. 
\end{itemize}
In \cite{AraHalLivdH2013} we introduced for any signed graph $(G,\Sigma)$, among other parameters, the signed graph parameters $M$ and  $\xi$. For a signed graph $(G,\Sigma)$, $M(G,\Sigma)$ is the maximum of the nullities of the matrices in $S(G,\Sigma)$. In order to describe the parameter $\xi$ we need the notion of \emph{Strong Arnold Property} (\emph{SAP} for short). A matrix $A=[a_{i,j}]\in S(G,\Sigma)$ has the SAP if $X=0$ is the only symmetric matrix $X=[x_{i,j}]$ such that $x_{i,j} = 0$ if $i$ and $j$ are adjacent vertices or $i=j$, and $A X = 0$. Then $\xi(G,\Sigma)$ is defined as the largest nullity of any matrix $A\in S(G,\Sigma)$ satisfying the SAP.  It is clear that $\xi(G,\Sigma)\leq M(G,\Sigma)$ for any signed graph $(G,\Sigma)$.
This signed graph parameter $\xi$ is analogous to the parameter $\xi$ for simple graphs introduced by Barioli, Fallat, and Hogben \cite{BFH2005a}.

If $G$ is a graph and $U\subseteq V(G)$, then $\delta(U)$ denotes the set of edges of $G$ with one end in $U$ and the other end in $V(G)-U$. The symmetric difference of two sets $A$ and $B$ is the set $A\Delta B = A\setminus B\cup B\setminus A$.
If $(G,\Sigma)$ is a signed graph and $U\subseteq V(G)$, we say that $(G,\Sigma)$ and $(G,\Sigma\Delta\delta(U))$ are \emph{sign-equivalent} and call the operation $\Sigma\to \Sigma\Delta\delta(U)$ \emph{re-signing on $U$}. Re-signing on $U$ amounts to performing a diagonal similarity on the matrices in $S(G,\Sigma)$, and hence it does not affect $M(G,\Sigma)$ and $\xi(G,\Sigma)$. 

Let $(G,\Sigma)$ be a signed graph.
If $H$ is a subgraph of $G$, then we say that $H$ is \emph{odd} if $\Sigma\cap E(H)$ has an odd number of elements, otherwise  we call $H$ \emph{even}. Zaslavsky showed in \cite{MR676405} that two signed graphs are sign-equivalent if and only if they have the same set of odd cycles. Thus, two signed graphs $(G,\Sigma)$ and $(G,\Sigma')$ that have the same set of odd cycles have $\xi(G,\Sigma) = \xi(G,\Sigma')$. 

In \cite{AraHalLivdH2013}, we showed that a signed graph $(G,\Sigma)$ has $M(G,\Sigma)\leq 1$ if and only if $(G,\Sigma)$ is sign-equivalent to a signed graph $(H,\emptyset)$, where $H$ is a graph whose underlying simple graph is a path. Furthermore, we showed that a signed graph $(G,\Sigma)$ has $\xi(G,\Sigma)\leq 1$ if and only if $(G,\Sigma)$ is sign-equivalent to a signed graph $(H,\emptyset)$, where $H$ is a graph whose underlying simple graph is a disjoint union of paths. Observe that in case the signed graph $(G,\Sigma)$ is connected, $M(G,\Sigma)\leq 1$ if and only if $\xi(G,\Sigma)\leq 1$. In this paper, we characterize the class of $2$-connected signed graphs $(G,\Sigma)$ with $M(G,\Sigma)\leq 2$. We will see that this class coincides with the class of signed graphs $(G,\Sigma)$ with $\xi(G,\Sigma)\leq 2$.

The above characterizations are extensions of results known for simple graphs to signed graphs. 
For a simple graph $G$, denote by $S(G)$  the set of all real symmetric $n\times n$ matrices $A=[a_{i,j}]$ with $a_{i,j}\not=0$ if $i$ and $j$ are connected by an edge, $a_{i,j}=0$ if $i\not=j$ and $i$ and $j$ are non-adjacent, and $a_{i,i} \in \mathbb{R}$ for all vertices $i$. The maximum nullity $M(G)$ of a simple graph $G$ is the maximum of the nullities of the matrices in $S(G)$. Fiedler \cite{MR0244285} proves that a simple graph $G$ has $M(G)\leq 1$ if and only if $G$ is a path. In \cite{MR2549052}, Johnson, Loewy, and Smith characterize the class of simple graphs $G$ with $M(G)\leq 2$. Barioli, Fallat, and Hogben introduced in \cite{BFH2005a} the parameter $\xi$. For a simple graph $G$, $\xi(G)$ is defined as the largest nullity of any matrix $A\in S(G)$ satisfying the SAP. In  \cite{BFH2005a}, they prove that a graph $G$ has $\xi(G)\leq 1$ if and only if $G$ is a subgraph of a path. In \cite{MR2312322}, Hogben and van der Holst give a characterization of the class of simple graphs $G$ with $\xi(G)\leq 2$.

\section{The maximum nullity of some signed graphs}

\emph{Contracting} an edge $e$ with ends $u$ and $v$ in a graph $G$ means deleting $e$ and identifying the vertices $u$ and $v$.
A signed graph $(H,\Gamma)$ is a \emph{weak minor} of a signed graph $(G,\Sigma)$ if $(H,\Gamma)$ can be obtained from $(G,\Sigma)$ by deleting edges and vertices, contracting edges, and re-signing around vertices. We use weak minor to distinguish it from minor in which only even edges are allowed to be contracted (possibly after re-signing around vertices).
The parameter $\xi$ has the nice property that if $(H, \Gamma)$ is a weak minor of the signed graph $(G,\Sigma)$, then $\xi(H,\Gamma)\leq \xi(G,\Sigma)$. 

Let us now introduce some signed graphs. By $K_n^e$ and $K_n^o$ we denote the signed graphs $(K_n, \emptyset)$ and $(K_n,E(K_n))$, respectively. By $K_n^=$ we denote the signed graph $(G,\Sigma)$, where $G$ is the graph obtained from $K_n$ by adding to each edge an edge in parallel, and where $\Sigma$ is the set of edges of $K_n$. By $K_4^i$ we denote the signed graph $(K_4,\{e\})$, where $e$ is an edge of $K_4$. By $K_{2,3}^e$ and $K_{2,3}^i$, we denote the signed graphs $(K_{2,3},\emptyset)$ and $(K_{2,3},\{e\})$, where $e$ is an edge of $K_{2,3}$, respectively.

The following lemma follows from Proposition~4 in \cite{AraHalLivdH2013}.
\begin{lemma}\label{lem:K=}
$M(K_n^=) = \xi(K_n^=) = n$.
\end{lemma}

From Proposition~8 in \cite{AraHalLivdH2013}, the following lemma follows.
\begin{lemma}\label{lem:Kns}
$M(K_n^e) = \xi(K_n^e) = n-1$ and $M(K_n^o) = \xi(K_n^o) = n-1$.
\end{lemma}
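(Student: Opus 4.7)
The plan is to prove the four equalities by a simple direct construction, together with a trivial rank lower bound and a trivial verification of the Strong Arnold Property (SAP) specific to complete graphs. Since $\xi\leq M$ always holds, it suffices to exhibit, in each of $S(K_n^e)$ and $S(K_n^o)$, a matrix of nullity $n-1$ that satisfies the SAP, and to show that no matrix in either set can have nullity $n$.

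First I would handle the upper bound. Any matrix $A\in S(K_n^e)$ has all off-diagonal entries strictly negative, and any matrix $A\in S(K_n^o)$ has all off-diagonal entries strictly positive; in particular $A\ne 0$, so $\rank(A)\ge 1$ and hence $\text{nullity}(A)\le n-1$. This immediately gives $M(K_n^e)\le n-1$ and $M(K_n^o)\le n-1$, and therefore the same bound for $\xi$.

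For the lower bound I would produce explicit witnesses. Let $J$ denote the $n\times n$ all-ones matrix. Then $-J\in S(K_n^e)$, since all off-diagonal entries are $-1<0$ and the diagonal entries are real; likewise $J\in S(K_n^o)$. Both matrices have rank $1$, so their nullity is $n-1$. It remains to check that these matrices satisfy the SAP. The SAP condition requires $X=0$ to be the only symmetric $X=[x_{i,j}]$ with $x_{i,j}=0$ whenever $i=j$ or $i$ and $j$ are adjacent, subject to $AX=0$. Because the underlying graph is $K_n$, every pair of distinct vertices is adjacent, so the support condition on $X$ already forces $x_{i,j}=0$ for all $i,j$; hence $X=0$ trivially, and the SAP holds for $\pm J$ (indeed for every matrix in $S(K_n^e)$ and in $S(K_n^o)$).

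Combining the two bounds gives $\xi(K_n^e)=M(K_n^e)=n-1$ and $\xi(K_n^o)=M(K_n^o)=n-1$. There is no real obstacle here: the only thing one might overlook is that the SAP verification becomes automatic precisely because $K_n$ has no non-adjacent pairs, which collapses the test-matrix condition to $X\equiv 0$. Alternatively, one could simply cite Proposition~8 of \cite{AraHalLivdH2013} as the paper already suggests, since that proposition yields the maximum nullity and SAP-maximum nullity of $(K_n,\emptyset)$ and $(K_n,E(K_n))$ as a special case.
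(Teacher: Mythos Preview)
Your argument is correct. The paper itself does not give a proof but simply defers to Proposition~8 of \cite{AraHalLivdH2013}, so your self-contained argument via $\pm J$ and the observation that the SAP constraint on $X$ is vacuous for complete graphs is more elementary and direct than what the paper provides; as you note at the end, the citation is the alternative the paper actually uses.
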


The following lemma follows from Proposition~34 in \cite{AraHalLivdH2013}.
\begin{lemma}
$M(K_4^i) = \xi(K_4^i) = 2$.
\end{lemma}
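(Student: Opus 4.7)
The plan is to prove $M(K_4^i) \leq 2$ by a sign-pattern argument on rank-one matrices, then exhibit an explicit matrix in $S(K_4^i)$ of nullity $2$, and finally observe that the Strong Arnold Hypothesis is automatic here because $K_4$ is a complete graph. Throughout, label the vertices so that the unique odd edge is $\{1,2\}$, so any $A \in S(K_4^i)$ has $a_{1,2} > 0$ and $a_{i,j} < 0$ for the other five off-diagonal pairs.

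For the upper bound, suppose toward contradiction that some $A \in S(K_4^i)$ has nullity at least $3$. Then $\rank A \leq 1$, and since $a_{1,2} \neq 0$ we have $\rank A = 1$, so $A = \epsilon v v^T$ for some nonzero $v \in \reals^4$ and $\epsilon \in \{+1,-1\}$. A short sign analysis gives the contradiction. If $\epsilon = +1$, then $v_1 v_2 > 0$ (WLOG $v_1, v_2 > 0$), while $v_1 v_3 < 0$ and $v_1 v_4 < 0$ force $v_3, v_4 < 0$; but then $a_{3,4} = v_3 v_4 > 0$, contradicting $a_{3,4} < 0$. The case $\epsilon = -1$ is symmetric: $v_1 v_2 < 0$ forces opposite signs on $v_1, v_2$, while $-v_1 v_3 < 0$ and $-v_2 v_3 < 0$ force $v_3$ to have a definite sign incompatible with both. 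Hence $M(K_4^i) \leq 2$.

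For the lower bound I would write down a single explicit rank-$2$ matrix in $S(K_4^i)$; after some trial the matrix
\[
A = \begin{pmatrix} 1 & 1 & -1 & -1 \\ 1 & 1 & -1 & -1 \\ -1 & -1 & -1 & -1 \\ -1 & -1 & -1 & -1 \end{pmatrix}
\]
works: its sign pattern matches $K_4^i$, its first two rows are equal and its last two rows are equal, yet rows $1$ and $3$ are linearly independent, so $\rank A = 2$ and $\nullity A = 2$. Thus $M(K_4^i) \geq 2$.

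Finally, since $K_4$ is complete, every pair of distinct vertices is adjacent, so the SAP condition ``$X$ symmetric with $X_{i,j}=0$ whenever $i=j$ or $ij \in E(K_4)$ and $AX=0$'' forces $X = 0$ for trivial reasons — there are no free entries. Hence every matrix in $S(K_4^i)$ automatically satisfies the SAP, and in particular the matrix $A$ above witnesses $\xi(K_4^i) \geq 2$. Combined with $\xi \leq M \leq 2$, this yields $M(K_4^i) = \xi(K_4^i) = 2$. The only slightly delicate step is the rank-one sign analysis; finding a rank-two matrix and handling SAP are routine once one notices the completeness of $K_4$.
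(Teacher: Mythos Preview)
Your proof is correct. The rank-one sign analysis for the upper bound is sound in both cases, the explicit matrix indeed lies in $S(K_4^i)$ with nullity $2$, and your observation that SAP is vacuous on a complete graph is exactly right.

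As for comparison with the paper: the paper does not actually prove this lemma at all. It simply records that the statement follows from Proposition~34 of \cite{AraHalLivdH2013} and moves on. So your self-contained argument already goes well beyond what the present paper offers; there is no alternative approach in the paper to compare against.
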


The following lemma follows from Proposition~35 in \cite{AraHalLivdH2013}.
\begin{lemma}\label{lem:K23s}
$M(K_{2,3}^e) = \xi(K_{2,3}^e) = 3$ and $M(K_{2,3}^i) = \xi(K_{2,3}^i) = 2$.
\end{lemma}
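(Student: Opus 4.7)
The lemma is stated to follow from Proposition~35 of \cite{AraHalLivdH2013}, so the quickest route is to invoke that result directly. For a self-contained argument I would split the two claims and argue each along the following lines.

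For $K_{2,3}^e=(K_{2,3},\emptyset)$ with bipartition $\{1,2\}\mid\{3,4,5\}$, every $A\in S(K_{2,3}^e)$ takes the bipartite block form $\bigl(\begin{smallmatrix}D_1 & -B\\ -B^T & D_2\end{smallmatrix}\bigr)$, with $D_1,D_2$ diagonal and $B$ a $2\times 3$ matrix with strictly positive entries. The lower bound is witnessed by $D_1=D_2=0$ and $B=J$ (all-ones): the off-diagonal block has rank~$1$, so $A$ has rank~$2$ and nullity~$3$. Checking SAP for this matrix reduces to showing that if $X$ is symmetric, supported only on the non-edges $\{(1,2),(3,4),(3,5),(4,5)\}$, and satisfies $AX=0$, then $X=0$; this is a short column-by-column calculation. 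The matching upper bound $\rank A\ge 2$ is obtained by ruling out rank~$1$: a symmetric rank-$1$ matrix is $\pm vv^T$, and the four zero constraints on the non-edges force at least two of $v_3,v_4,v_5$ to vanish, after which some required nonzero entry $A_{ij}=\pm v_iv_j$ with $i\in\{1,2\}$ is forced to be zero---contradiction.

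For $K_{2,3}^i$ (taking the odd edge to be $(1,3)$), the $2\times 3$ block $B$ now has sign pattern $\bigl(\begin{smallmatrix}+&-&-\\ -&-&-\end{smallmatrix}\bigr)$. The lower bound $M,\xi\ge 2$ is realized explicitly by taking $d_3=1$, $d_4=d_5=2$ and $B$ equal to the sign pattern above with entries $\pm 1$: a Schur-complement computation then forces $d_1=d_2=2$ so that $D_1-BD_2^{-1}B^T=0$, yielding a rank-$3$ matrix, and SAP is again verified column-by-column. The matching upper bound $M\le 2$, i.e.\ $\rank A\ge 3$ for every such $A$, is the heart of the proof and I expect it to be the main obstacle. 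If $D_2$ is invertible, the Schur complement formula gives $\rank A=3+\rank(D_1-BD_2^{-1}B^T)\ge 3$. If $D_2$ is singular, one case-analyses which of $d_3,d_4,d_5$ vanish, using the key sign observation that in $\mathbb{R}^2$ the pair $(a,-p)$ (the first two coordinates of $r_3$) is \emph{not} proportional to either $(-b,-q)$ or $(-c,-r)$ (the first two coordinates of $r_4,r_5$)---precisely because of the positivity of $a$ coming from the odd edge. In every subcase this produces three linearly independent rows among $r_1,\dots,r_5$; the most delicate subcase is $d_4=d_5=0$ with $d_3\ne 0$ and $r_4\parallel r_5$, where one must additionally exploit $r_1$ (or $r_2$) to supply the third independent row, and it is exactly here that the positive sign at the odd edge is essential.
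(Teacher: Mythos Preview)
Your opening sentence already reproduces the paper's proof verbatim: the paper simply says the lemma ``follows from Proposition~35 in \cite{AraHalLivdH2013}'' and gives no further argument, so invoking that proposition is exactly what is done there.

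The self-contained sketch you add is not in the paper but is essentially sound. One small correction to your narrative for the upper bound $M(K_{2,3}^i)\le 2$: you identify the ``most delicate subcase'' as $d_4=d_5=0$, $d_3\ne 0$, $r_4\parallel r_5$, and say the odd-edge sign is essential precisely there. In fact in that subcase the rows $r_1,r_3,r_4$ are independent for purely support reasons (the $4$th coordinate of $r_1$ is $-b\ne 0$, then the $3$rd coordinate $d_3\ne 0$ of $r_3$, then $r_4\ne 0$), with no appeal to signs. The place where the sign pattern genuinely does the work is the subcase $d_3=d_4=d_5=0$: there all of $r_3,r_4,r_5$ live in the first two coordinates, and it is exactly the non-proportionality of $(a,-p)$ to $(-b,-q)$---forced by the odd edge---that gives two independent rows among them, after which $r_1$ (whose third coordinate is $a\ne 0$) supplies the third. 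This is also the subcase that distinguishes $K_{2,3}^i$ from $K_{2,3}^e$, where the all-$(-1)$ choice with all $d_i=0$ collapses to rank~$2$. With that relabelling of which subcase is critical, your case analysis goes through.
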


From Lemmas~\ref{lem:Kns} and~\ref{lem:K23s}, it follows that signed graphs $(G,\Sigma)$ with $\xi(G,\Sigma)\leq 2$ cannot have a weak $K_4^e$-, $K_4^o$-, or $K_{2,3}^e$-minor.  If a signed graph $(G,\Sigma)$ has no weak $K_4^e$-, $K_4^o$-, or $K_{2,3}^e$-minor, the graph $G$ can still have a $K_4$- or $K_{2,3}$-minor. However, these minors force the signed graph to have additional structure. We will study this in the next section.

By $W_4$ we denote the graph obtained from $C_4$ by adding a new vertex $v$, called the \emph{hub}, and connecting it to each vertex of $C_4$. The subgraph $C_4$ in $W_4$ is called the \emph{rim} of $W_4$. Any edge between $v$ and a vertex of the rim of $W_4$ is called a \emph{spoke}.
Let $e_1,e_2$ be two nonadjacent edges of the $C_4$ in $W_4$.
By $W_4^o$, we denote the signed graph $(W_4, \{e_1,e_2\})$. See Figure~\ref{fig:W4o} for a picture of $W_4^o$; here a bold edge is an odd edge and a thin edge an even edge.
This signed graph appears as a special case in the characterization of $2$-connected signed graphs $(G,\Sigma)$ with $M(G,\Sigma)\leq 2$.

\begin{figure}[h]
	\begin{center}
		\includegraphics[width=0.3\textwidth]{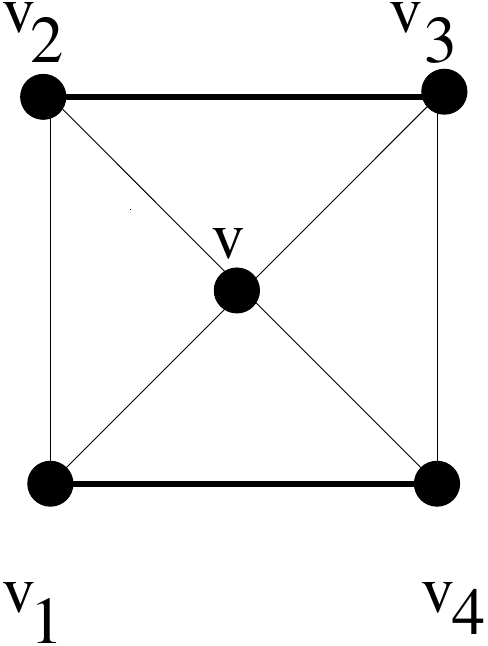}
	\end{center}
	\caption{The signed four-wheel.}\label{fig:W4o}
\end{figure}

\begin{lemma}\label{lem:MW4o}
$M(W_4^o) = \xi(W_4^o) = 2$ 
\end{lemma}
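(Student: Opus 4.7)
The plan is to prove the two inequalities $\xi(W_4^o)\ge 2$ and $M(W_4^o)\le 2$ separately; combined with the general inequality $\xi\le M$, they pin down the common value of both parameters.

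For $\xi(W_4^o)\ge 2$, I will exhibit a matrix $A\in S(H,\Gamma)$ in the re-signing $(H,\Gamma)$ of $W_4^o$ obtained by re-signing at the hub (which sends $W_4^o$ to the signing in which all four spokes and both original odd rim edges are odd) that has nullity exactly $2$ and satisfies the SAP. A symmetric ansatz---uniform rim weights in each parity class, uniform positive spoke weights, and diagonals tuned so that both $(1,-1,1,-1,0)^T$ and $(1,1,1,1,c)^T$ lie in $\ker A$---produces, by a small linear system on the free parameters, a specific $A$ with rank exactly $3$ (one $3\times 3$ principal minor is easily seen to be nonzero). To verify the SAP, observe that the only non-edges of $W_4$ are $\{1,3\}$ and $\{2,4\}$, so any admissible $X$ is parameterised by $X_{1,3}=a$ and $X_{2,4}=b$; writing out $AX=0$ entry-wise then collapses to two scalar equations that force $a=b=0$.

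For $M(W_4^o)\le 2$, I argue by contradiction. Suppose $A\in S(W_4^o)$ has $\operatorname{rank} A\le 2$. Ranks $0$ and $1$ are ruled out immediately: if $A=\pm xx^T$, the relation $a_{1,3}=\pm x_1 x_3=0$ forces $x_1=0$ or $x_3=0$, annihilating either $a_{1,2}$ or $a_{2,3}$. In the rank-$2$ case, the spectral decomposition yields $A=\epsilon_1 xx^T+\epsilon_2 yy^T$ with $\epsilon_i\in\{\pm 1\}$. Interpret the columns as vectors $u_i=(x_i,y_i)\in\mathbb{R}^2$, so that $a_{i,j}=\langle u_i,u_j\rangle$ in either the Euclidean inner product (when $\epsilon_1=\epsilon_2$) or the Minkowski inner product of signature $(+,-)$ (when $\epsilon_1=-\epsilon_2$).

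In the Euclidean case, $u_1\perp u_3$ lets one normalise $u_1=(\ell,0)$ and $u_3=(0,m)$; the signs of $a_{1,2},a_{2,3}$ pin $u_2$ into one quadrant and the signs of $a_{1,4},a_{3,4}$ pin $u_4$ into the opposite quadrant, forcing $u_2\cdot u_4\neq 0$ and contradicting $a_{2,4}=0$. In the Minkowski case, I split on the causal type of $u_1$. If $u_1$ is timelike, normalise $u_1=(\lambda_1,0)$ via $O(1,1)$; then $u_3=(0,s_3)$ from $u_1\perp u_3$, and propagating the sign conditions on $a_{1,2},a_{1,4},a_{1,5},a_{2,3},a_{3,4},a_{3,5}$ through the Minkowski pairings forces $r_4 r_5>0$ and $s_4 s_5<0$, so $a_{4,5}=r_4 r_5 - s_4 s_5>0$, contradicting $a_{4,5}<0$. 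The spacelike subcase is symmetric (swap $u_1\leftrightarrow u_3$) and yields the wrong sign for $a_{2,5}$. In the lightlike subcase $u_3$ must lie on the same null line as $u_1$, and then the scalar prefactors needed for $a_{1,5}<0$ and $a_{3,5}<0$ have opposite signs, contradicting that both products are negative. This exhausts all rank-$2$ configurations and yields $\operatorname{rank} A\ge 3$.

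The \emph{main obstacle} is the Minkowski case analysis: its three subcases (timelike/spacelike/lightlike for $u_1$) each require careful sign tracking for the coordinate components of the $u_i$ across a cascade of implications coming from the edge and non-edge sign constraints. The Euclidean case and the SAP verification are essentially immediate.
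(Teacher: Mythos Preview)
Your argument is essentially correct and takes a genuinely different route from the paper's on both halves.

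For the upper bound $M(W_4^o)\le 2$, the paper works directly with kernel vectors: assuming nullity $\ge 3$, it chooses $x\in\ker A$ with $x_{v_1}=x_{v_2}=0$ and $y\in\ker A$ with $y_{v_1}=y_{v_4}=0$, propagates signs around the wheel, and obtains the contradictory conclusions $a_{v,v}>0$ (from $x$) and $a_{v,v}<0$ (from $y$). This is shorter than your Gram-matrix analysis, avoids the Euclidean/Minkowski case split entirely, and needs only the rows of $A$ indexed by rim vertices and the hub. Your rank-$2$ decomposition approach is valid and has the conceptual appeal of interpreting the sign pattern geometrically, but it costs you the three-way case split on the causal type of $u_1$. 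One small slip: in your lightlike subcase, the two conditions $a_{1,5}<0$ and $a_{3,5}<0$ are \emph{compatible} (both force the proportionality constant $c$ in $u_3=cu_1$ to be positive); the contradiction only appears once you compare with a rim pair of opposite signs, e.g.\ $a_{1,2}>0$ and $a_{2,3}<0$ (or $a_{1,4}<0$ and $a_{3,4}>0$), which forces $c<0$.

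For the lower bound $\xi(W_4^o)\ge 2$, the paper simply invokes minor-monotonicity of $\xi$: since $W_4^o$ contains an odd cycle it has $K_2^=$ as a weak minor, and $\xi(K_2^=)=2$. Your explicit construction of a nullity-$2$ matrix with the SAP is correct and self-contained, but considerably heavier; the paper's one-line argument suffices here.
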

\begin{proof}
Let $v_1,v_2,v_3,v_4$ be the vertices on the rim of $W_4$ in this cyclic order, and let $v$ be the hub of $W_4$. We assume that the edges between $v_1$ and $v_2$ and between $v_3$ and $v_4$ are even. Suppose for a contradiction that $M(W_4^o)\geq 3$. Then there exists a matrix $A=[a_{i,j}]\in S(W_4^o)$ with nullity $\geq 3$. Since $\nullity(A)\geq 3$, there
exist nonzero vectors $x, y\in \ker(A)$ with $x_{v_1} = x_{v_2} = 0$ and $y_{v_1} = y_{v_4} = 0$. If $x_{v} = 0$, then from $A x = 0$ it would follow that $x_{v_4} = x_{v_3} = 0$. This contradiction shows that $x_{v}\not=0$. We may assume that $x_{v} > 0$. Then, since the edge between $v$ and $v_1$ is even, the edge between $v_1$ and $v_4$ is odd, and $A x = 0$, it follows that $x_{v_4} > 0$. Similarly, $x_{v_3} > 0$.  Let $a_{v}$ denote the row of $A$ corresponding to $v$. Then $a_v x = 0$, and, since $x_v > 0$, $x_{v_4} >0$, $x_{v_3} > 0$, it follows that $a_{v,v} > 0$.

We will now do the same with the vector $y$. If $y_v = 0$, then it would follow that $y_{v_2} = y_{v_3} = 0$. This contradiction shows that $y_v \not=0$. We may assume that $y_v>0$. Then, since the edge between $v_1$ and $v_2$ is even, the edge between $v_1$ and $v$ is even, and $A y = 0$, it follows that $y_{v_2} < 0$. Similarly, $y_{v_3} < 0$. Since $a_v y = 0$ and $y_v > 0$, $y_{v_2} < 0$, $y_{v_3} < 0$, it follows that $a_{v,v} < 0$. We have arrived at a contradiction, and we can conclude that $\xi(W_4^o)\leq M(W_4^o) \leq 2$. Since $W_4^o$ contains an odd cycle, $M(W_4^o)\geq \xi(W_4^o) \geq \xi(K_2^=) = 2$, and hence $M(W_4^o) = \xi(W_4^o) = 2$.
\end{proof}

\section{Wide separations}

Let $(G,\Sigma)$ be a signed graph.
A pair $[G_1, G_2]$ of subgraphs of $G$ is a \emph{wide separation} of $(G,\Sigma)$ if there exists an odd $4$-cycle $C_4$ such that $G_1\cup C_4\cup G_2 = G$, $E(G_1)\cap E(C_4)=\emptyset$, $E(G_2)\cap E(C_4)=\emptyset$, $V(G_1)\cap V(G_2)=\emptyset$, $V(G_1)\cap V(C_4)=\{r_1,r_2\}$ and $V(G_2)\cap V(C_4)=\{s_1,s_2\}$, where $r_1$ and $r_2$ are nonadjacent vertices of $C_4$ and $s_1$ and $s_2$ are nonadjacent vertices of $C_4$. We call $r_1,r_2$ the vertices of attachment of $G_1$ and $s_1,s_2$ the vertices of attachment of $G_2$ in the wide separation. See Figure~\ref{fig:widesep} for an illustration. Here a bold edge is an odd edge and a thin edge an even edge.

\begin{figure}[h]
	\begin{center}
		\includegraphics[width=0.6\textwidth]{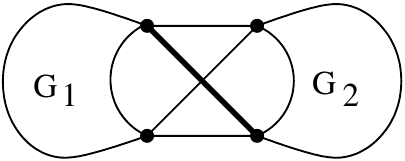}
	\end{center}
	\caption{A wide separation $[G_1, G_2]$.}\label{fig:widesep}
\end{figure}

\begin{lemma}\label{lem:K4d}
Let $(G,\Sigma)$ be a signed graph with no weak minor isomorphic to $K_4^e$, $K_4^o$, or $K_{2,3}^i$.
If $G$ has a $K_4$-minor, but no $W_4$-minor, then $(G,\Sigma)$ has a wide separation.
\end{lemma}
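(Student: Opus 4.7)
Since $G$ has a $K_4$-minor and $K_4$ has maximum degree three, $G$ contains a $K_4$-subdivision $H$ with branch vertices $a,b,c,d$ and six pairwise internally-disjoint paths $P_{ij}$, one between each pair. Contracting each $P_{ij}$ to a single edge yields a signed $K_4$-minor whose signing is determined by the parities of the paths, modulo re-signing at $a,b,c,d$. An enumeration of edge-parity vectors on $K_4$ modulo the $3$-dimensional re-signing subspace and modulo the automorphism group of $K_4$ shows that there are only three isomorphism classes of signed $K_4$: $K_4^{e}$, $K_4^{o}$, and $K_4^{i}$. The hypothesis on $(G,\Sigma)$ excludes the first two, so the induced signed $K_4$-minor is sign-equivalent to $K_4^{i}$. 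After a global re-signing in $G$ I may assume exactly one path, say $P_{ab}$, is odd and the other five are even; then of the three $4$-cycles of $K_4$ the two through the edge $ab$ are odd while the third, $a$-$c$-$b$-$d$-$a$, is even.

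The key step is to show that $H$ can be chosen so that one of the odd $4$-cycles of $K_4$ lifts to a genuine $4$-cycle of length four in $G$ -- equivalently, that the four paths $P_{ab},P_{bc},P_{cd},P_{ad}$ are all single edges. For each of them the no-$K_{2,3}^{i}$-minor hypothesis forces length one: for instance, if $P_{ab}$ had an internal vertex, the three internally-disjoint $a$-$b$ paths $P_{ab}$, $P_{ac}\cup P_{bc}$, $P_{ad}\cup P_{bd}$ would form a $K_{2,3}$-subdivision with path parities $(1,0,0)$, which is of $K_{2,3}^{i}$-type and hence forbidden; an internal vertex on $P_{bc}$ yields a $K_{2,3}^{i}$-subdivision with branches $b,c$, and analogously for $P_{cd}$ and $P_{ad}$. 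Hence $C=a$-$b$-$c$-$d$-$a$ is an odd $4$-cycle of length four in $G$.

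Next I plan to show that the rest of $G$ decomposes across the diagonals $\{a,c\}$ and $\{b,d\}$. Write $G$ as the union of $H$ with its bridges -- the components of $G-V(H)$ together with their attachments on $V(H)$, plus the chords of $H$. I claim each bridge has all its attachments contained in $V(P_{ac})\cup\{a,c\}$ or in $V(P_{bd})\cup\{b,d\}$. A bridge meeting two ``opposite'' paths of $H$ (two paths sharing no branch vertex) produces a $W_4$-minor: contracting the bridge together with one of the two opposite paths yields a vertex adjacent to all four branch vertices, which plays the role of the hub of a wheel whose rim is the remaining $4$-cycle of $K_4$, contradicting the no-$W_4$-minor hypothesis. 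A bridge meeting three or more distinct paths likewise yields a $W_4$-minor. The remaining bridges -- those meeting only one path, or only two paths sharing a branch vertex -- are either sorted onto the $P_{ac}$- or $P_{bd}$-side, or excluded by a further application of the no-$K_{2,3}^{i}$-minor hypothesis after rerouting. Defining $G_1$ as the subgraph of $G$ consisting of $P_{ac}$ together with all bridges on that side, and $G_2$ analogously for $P_{bd}$, gives $V(G_1)\cap V(G_2)=\emptyset$, $E(G_i)\cap E(C)=\emptyset$ and $G_1\cup C\cup G_2=G$; taking $\{r_1,r_2\}=\{a,c\}$ and $\{s_1,s_2\}=\{b,d\}$ exhibits $[G_1,G_2]$ as the required wide separation.

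The main obstacle is the bridge case analysis in the third paragraph, particularly the treatment of bridges attaching to two paths that share a branch vertex: such bridges do not immediately yield a $W_4$-minor, so sorting them to the correct side requires combining the no-$W_4$- and no-$K_{2,3}^{i}$-minor hypotheses with a careful iterative rerouting of $H$ that preserves the $K_4^{i}$ sign-equivalence class and the length-one property of the four cycle-edges already established.
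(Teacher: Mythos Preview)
Your argument has a concrete gap in the second paragraph. The claim that the no-$K_{2,3}^{i}$ hypothesis forces each of $P_{ab},P_{bc},P_{cd},P_{ad}$ to have length one breaks down for $P_{cd}$. With $P_{ab}$ the unique odd path, the three internally-disjoint $c$--$d$ paths in the subdivision are $P_{cd}$, $P_{ca}\cup P_{ad}$, and $P_{cb}\cup P_{bd}$, and all three are \emph{even}, since none of them uses $P_{ab}$. The resulting signed $K_{2,3}$-minor is therefore of type $K_{2,3}^{e}$, not $K_{2,3}^{i}$, and the hypothesis of the lemma does not exclude it. Your ``analogously'' is thus unjustified precisely at the edge opposite the unique odd edge; this is unavoidable with your method, because $cd$ lies on both odd $4$-cycles of $K_4^{i}$ and every $c$--$d$ path through the subdivision avoids $ab$.

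The paper closes this gap by a different mechanism: it never appeals to the $K_{2,3}^{i}$ hypothesis, relying instead solely on the no-$K_4^{e}$/$K_4^{o}$ hypotheses together with re-signing at internal vertices of a long path. For example, if $P_{cd}$ has length $\geq 2$, re-sign at an internal vertex so that one of its edges becomes odd and contract that edge; this flips the parity of $P_{cd}$ to odd, so the contracted $K_4$ now has exactly the two opposite edges $ab$ and $cd$ odd, and then all four triangles are odd, giving a weak $K_4^{o}$-minor. An analogous trick on $P_{ab}$ yields $K_4^{e}$, and pairwise arguments on the remaining four paths (at least one of each adjacent pair must be short, else re-signing produces $K_4^{o}$) pin down which four paths are single edges. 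You can patch your proof by borrowing this re-signing/contraction device just for $P_{cd}$, but note that the paper's route makes the $K_{2,3}^{i}$ hypothesis entirely superfluous for this lemma.
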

\begin{proof}
Since $G$ has a $K_4$-minor and all vertices of $K_4$ have degree three, $G$ has a subgraph isomorphic to a subdivision of $K_4$. Hence there are distinct vertices $v_1,v_2,v_3,v_4$ and openly disjoint paths $P_1,\ldots,P_6$ of length $\geq 1$ in $G$, where $P_1$ has ends $v_1$ and $v_2$, $P_2$ has ends $v_1$ and $v_3$, $P_3$ has ends $v_1$ and $v_4$, $P_4$ has ends $v_2$ and $v_3$, $P_5$ has ends $v_2$ and $v_4$, and $P_6$ has ends $v_3$ and $v_4$. 
Let $H$ be the subgraph spanned by $v_1,v_2,v_3,v_4$ and $P_1,\ldots,P_6$. Since $(G,\Sigma)$ has no weak $K_4^e$- or $K_4^o$-minor, $(H, \Sigma\cap E(H))$ has no weak $K_4^e$- or $K_4^o$-minor.

By re-signing if needed, we may assume that, in $(G,\Sigma)$, $P_1$ is an odd path and that $P_2,\ldots,P_6$ are even paths. 
We can see that the paths $P_1$, $P_2$, $P_5$, and $P_6$ consist of single edges as follows.
$P_6$ is a single edge, for otherwise, possibly after re-signing, we contract an odd edge in $P_6$ and obtain a signed graph that contains $K_4^o$ as a weak minor. $P_1$ is a single edge, for otherwise, we contract an odd edge in $P_1$ and obtain a signed graph that contains $K_4^e$ as a weak minor. If both $P_3$ and $P_5$ have length $\geq 2$, then, possibly after re-signing, we contract an odd edge in $P_3$ and in $P_5$, and obtain a signed graph that has $K_4^o$ as a weak minor. Hence at least one of $P_3$ and $P_5$ consists of a single edge. In the same way, at least one of $P_2$ and $P_4$ consists of a single edge. If both $P_2$ and $P_3$ have length $\geq 2$, then possibly after re-signing, we contract an odd edge in $P_2$ and in $P_3$, and obtain a signed graph that has $K_4^o$ as a weak minor. Hence at least one of $P_2$ and $P_3$ consists of a single edge. In the same way, at least one of $P_4$ and $P_5$ consists of a single edge. If one of $P_2$ and $P_5$ has length $\geq 2$, then $P_3$ and $P_4$ consist each of a single edge. Hence, we can conclude that both $P_2$ and $P_5$ consist of single edges, or that both $P_3$ and $P_4$ consist of single edges. By symmetry we may assume that $P_2$ and $P_5$ consist of single edges.


Since $G$ has no $W_4$-minor, each path connecting a vertex of $P_3$ to a vertex of $P_4$ must contain at least one vertex of $\{v_1,v_4\}$ and at least one vertex of $\{v_2,v_3\}$.
Thus  $(G,\Sigma)$ has a wide separation.
\end{proof}

The graph with two vertices and no edges is denoted by $K_2^c$.

\begin{lemma}\label{lem:widesepK23o}
Let $(G,\Sigma)$ be a graph with no weak $K_{2,3}^e$-minor. If $G$ has a $K_{2,3}$-minor but no $K_4$-minor, then $(G,\Sigma)$ has a wide separation $[G_1, G_2]$, where $G_1$ is isomorphic to $K_2^c$.
\end{lemma}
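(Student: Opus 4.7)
The plan is to lift the $K_{2,3}$-minor to a subdivision of $K_{2,3}$ inside $G$ (valid since $K_{2,3}$ has maximum degree $3$), use the no-weak-$K_{2,3}^e$-minor hypothesis to control the cycle parities in the resulting theta graph, and then reduce the subdivision to a $K_{2,3}$-subgraph whose internal vertices have degree $2$ in $G$. Two such internal vertices will form the $G_1 \cong K_2^c$ side of the wide separation, with an odd $4$-cycle passing through them and through the branch vertices.

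First, fix branch vertices $u, v$ and internally disjoint $u$-$v$ paths $P_1, P_2, P_3$ of lengths $\ell_i \geq 2$ for the subdivision, and set $H = P_1 \cup P_2 \cup P_3$. The three cycles $C_{ij} = P_i \cup P_j$ have parities $c_{ij}$ satisfying $c_{12} + c_{13} + c_{23} \equiv 0 \pmod{2}$ because each edge of $H$ lies in exactly two of them. If all $c_{ij}$ are zero then $H$ is sign-equivalent to an all-even theta, and contracting the even inner edges down to length-$2$ paths produces a weak $K_{2,3}^e$-minor, contrary to hypothesis. So exactly two $c_{ij}$ are odd, and after relabeling I take $c_{12} = c_{13} = 1$ and $c_{23} = 0$.

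Next, I would pick a $K_{2,3}$-subdivision that minimises $\ell_1 + \ell_2 + \ell_3$ and argue that each $\ell_i = 2$ and that each internal vertex of each $P_i$ has degree $2$ in $G$. If either condition fails at some vertex $w$ of some $P_i$, the extra edge at $w$ (or the extra length of $P_i$) can be used to reroute: either the subdivision shortens directly (contradicting minimality), or, exploiting the even cycle $C_{23}$, one isolates a theta-subgraph whose three cycles are all even, which after even-edge contractions and re-signings yields a weak $K_{2,3}^e$-minor, again a contradiction. This is the most delicate step of the proof.

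Finally, once the subdivision is a $K_{2,3}$-subgraph of $G$ with degree-$2$ internal vertices $a, b, c$ (on $P_1, P_2, P_3$), the $4$-cycle $C_{12} = u - a - v - b - u$ is odd since $c_{12} = 1$. Set $G_1$ to be the subgraph of $G$ on $\{a, b\}$ with no edges (so $G_1 \cong K_2^c$) and $G_2$ to be the subgraph of $G$ on $V(G) \setminus \{a, b\}$. The wide-separation conditions are then immediate: $V(G_1) \cap V(G_2) = \emptyset$, $G_1 \cup C_{12} \cup G_2 = G$, the edge sets of $G_1, G_2, C_{12}$ are pairwise disjoint (since $a$ and $b$ have no edges outside $E(C_{12})$), and $V(C_{12})$ splits as $\{a, b\}$ (nonadjacent) and $\{u, v\}$ (nonadjacent). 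The main obstacle is the rerouting/minimality analysis in the third paragraph, where the parity data---especially $c_{23} = 0$---is what makes the $K_{2,3}^e$-extraction argument go through.
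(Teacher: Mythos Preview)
Your parity analysis of the theta graph is correct and matches the paper's in spirit; the paper phrases it with six paths (one per edge of $K_{2,3}$) rather than three arms, and argues directly --- not via minimality --- that after a suitable re-signing five of the six paths must be single edges, since otherwise one can contract inside a too-long path to manufacture a weak $K_{2,3}^e$-minor. (In particular it does not claim all three arms have length $2$; only two of them must.)

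The step you flag as ``the most delicate'' is a genuine gap, and not one that more care will close under the stated hypotheses. Take $G$ to be $K_{2,3}$ with parts $\{u,v\}$ and $\{a,b,c\}$ together with an extra edge $ac$, and set $\Sigma=\{ua\}$. This signed graph has a $K_{2,3}$-minor and no weak $K_{2,3}^e$-minor (any contraction drops below five vertices, and the unique spanning $K_{2,3}$-subgraph $G-ac$ is $K_{2,3}^i$). Yet the only odd $4$-cycles are $u\,a\,v\,b\,u$ and $u\,a\,v\,c\,u$, and in each every opposite pair of vertices contains a vertex of degree $3$ in $G$; hence no wide separation with $G_1\cong K_2^c$ exists. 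Your rerouting sketch cannot succeed here: the original theta is the unique $K_{2,3}$-subdivision in $G$, and no all-even theta can be extracted. The paper's own proof does not address the passage from the subdivision $H$ to a degree constraint in $G$ either; it establishes the $4$-cycle structure inside $H$ and then simply asserts the wide separation of $G$. In the paper's single application of this lemma the extra hypothesis that $G$ has no $K_4$-minor is in force, and the counterexample above does have a $K_4$-minor (contract $ub$), so the statement --- and your argument --- may well be repairable under that added assumption.
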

\begin{proof}
Since $G$ has a $K_{2,3}$-minor, $G$ has a subgraph $H$ isomorphic to a subdivision of $K_{2,3}$. Hence there are vertices $v_1,v_2,v_3,v_4,v_5$ and openly disjoint paths $P_1,\ldots,P_6$ of length $\geq 1$ in $H$, where $P_1$ has ends $v_1$ and $v_2$, $P_2$ has ends $v_1$ and $v_3$, $P_3$ has ends $v_1$ and $v_4$, $P_4$ has ends $v_2$ and $v_5$, $P_5$ has ends $v_3$ and $v_5$, and $P_6$ has ends $v_4$ and $v_5$. We now view the paths $P_1,\ldots,P_6$ as paths in the signed graph $(H,\Sigma\cap E(H))$. As $(G,\Sigma)$ has no weak $K_{2,3}^e$-minor, $(G,\Sigma)$ has a weak $K_{2,3}^i$-minor. Hence we may re-sign $(H,\Sigma\cap E(H))$ such that $P_1$ is odd and $P_2,\ldots,P_6$ are even.
 
$P_1$ is a single edge, for otherwise, possibly after re-signing, we contract an odd edge in $P_1$ and obtain a signed graph that contains $K_{2,3}^e$ as a weak minor. Similarly, $P_4$ is a single edge. If both $P_2$ and $P_3$ have length $\geq 2$, then, possibly after re-signing, we contract an odd edge in $P_2$ and in $P_3$, and obtain a signed graph that has $K_{2,3}^e$ as a weak minor. Hence, at least one of $P_2$ and $P_3$ consists of a single edge. Similarly, at least one of $P_5$ and $P_6$ consists of a single edge, at least one of $P_2$ and $P_6$ consists of a single edge, and at least one of $P_3$ and $P_5$ consists of a single edge. Hence at most one path of $P_2,P_3,P_5,P_6$ has length $\geq 2$. By symmetry, we may assume that each path of $P_2,P_3,P_5$ consists of a single edge. Let $Q$ be the concatenation of $P_3$ and $P_6$. 
Since $G$ has no $K_4$-minor, there are no paths in $G$ connecting $v_2$ and $v_3$, there are no paths in $G$ connecting $v_2$ and an internal vertex of $Q$, and there are no paths in $G$ connecting $v_3$ and an internal vertex of $Q$. 
Hence, $(G,\Sigma)$ has a wide separation $[G_1,G_2]$, where $G_1$ is isomorphic to $K_2^c$.
\end{proof}

\section{The signed four-wheel}
In this section we show that if $(G,\Sigma)$ is a signed graph with no weak $K_4^e$-, $K_4^o$-, or $K_{2,3}^e$-minor, but $G$ has a $W_4$-minor, then the edges in each parallel class of $(G,\Sigma)$ have the same parity and, after removing all but one edge from each parallel class, we obtain $W_4^o$.

\begin{lemma}\label{lem:W4signequivalent}
Let $(W_4,\Sigma)$ be a signed graph with no weak $K_4^e$-, $K_4^o$-, or $K_{2,3}^e$-minor. Then $(W_4,\Sigma)$ is sign-equivalent to $W_4^o$. 
\end{lemma}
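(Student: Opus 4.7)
The plan is to re-sign so that every spoke becomes even and then deduce the possible rim parities from the three forbidden weak minors.

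First I would let $v_1,v_2,v_3,v_4$ be the rim vertices of $W_4$ in cyclic order and $v$ the hub, and then re-sign on the vertex set $U=\{v_i : vv_i\in \Sigma\}$. This flips precisely the odd spokes and so produces a sign-equivalent signed graph whose spokes are all even; it therefore suffices to treat that case. I would then let $\sigma_i\in\{0,1\}$ denote the parity of the rim edge $v_iv_{i+1}$ (indices mod $4$) in the re-signed graph. Since $W_4^o$ is, up to sign-equivalence, the pattern in which every spoke is even and exactly two non-adjacent rim edges are odd, the goal becomes to show that $(\sigma_1,\sigma_2,\sigma_3,\sigma_4)\in\{(1,0,1,0),(0,1,0,1)\}$.

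Next I would extract $K_4$-minors by contracting each rim edge in turn. Contracting $v_1v_2$ produces two parallel even edges between the merged vertex $w$ and $v$ (coming from the spokes $vv_1,vv_2$); deleting one of them yields a weak $K_4$-minor on $\{w,v,v_3,v_4\}$ whose four triangles $\{w,v,v_3\}$, $\{v,v_3,v_4\}$, $\{w,v,v_4\}$, and $\{w,v_3,v_4\}$ have parities $\sigma_2$, $\sigma_3$, $\sigma_4$, and $\sigma_2+\sigma_3+\sigma_4$, respectively. Hence this is a weak $K_4^e$-minor iff $(\sigma_2,\sigma_3,\sigma_4)=(0,0,0)$, and a weak $K_4^o$-minor iff $(\sigma_2,\sigma_3,\sigma_4)=(1,1,1)$. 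Running the same argument for the three other rim-edge contractions imposes the analogous condition on each of the remaining complementary triples; combining the four resulting constraints forces exactly two of the $\sigma_i$'s to equal $1$.

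To pin down which pair can occur, I would use the two $K_{2,3}$-subgraphs $H_1$ with parts $\{v_1,v_3\},\{v,v_2,v_4\}$ and $H_2$ with parts $\{v_2,v_4\},\{v,v_1,v_3\}$; each consists of two (even) spokes together with all four rim edges. The three $4$-cycles of $H_1$ have parities $\sigma_1+\sigma_2$, $\sigma_3+\sigma_4$, and $\sigma_1+\sigma_2+\sigma_3+\sigma_4$, so $H_1$ is a weak $K_{2,3}^e$-minor iff $\sigma_1=\sigma_2$ and $\sigma_3=\sigma_4$; symmetrically $H_2$ is a weak $K_{2,3}^e$-minor iff $\sigma_1=\sigma_4$ and $\sigma_2=\sigma_3$. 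Ruling both out shows that the two odd rim edges cannot be adjacent on the rim, which together with the previous paragraph leaves only the configurations $(1,0,1,0)$ and $(0,1,0,1)$; in either case $(W_4,\Sigma)$ is sign-equivalent to $W_4^o$. The main obstacle will be carefully bookkeeping the several parity constraints coming from the different minors simultaneously, but once the spokes are normalized each constraint reduces to a short case check on the four bits $\sigma_i$.
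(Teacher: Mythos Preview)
Your argument is correct and follows essentially the same route as the paper's proof: after your normalization the four bits $\sigma_i$ are exactly the parities of the four triangles of $W_4$, and both proofs then use the $K_4$ weak minors obtained by contracting a rim edge together with the $K_{2,3}$ subgraphs obtained by deleting two opposite spokes to force exactly two odd triangles that are non-adjacent. The paper phrases this as ``at most one even (resp.\ odd) triangle forces a $K_4^o$ (resp.\ $K_4^e$) minor, and two odd triangles sharing an edge force a $K_{2,3}^e$ minor,'' whereas you carry out the same deductions via explicit parity bookkeeping on the $\sigma_i$; the content is the same, your version is simply more detailed.
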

\begin{proof}
If at most one triangle of $(W_4,\Sigma)$ is even, then $(W_4,\Sigma)$ has a weak $K_4^o$-minor.
If at most one triangle of $(W_4,\Sigma)$ is odd, then $(W_,\Sigma)$ has a weak $K_4^e$-minor.
So we may assume that $(W_4,\Sigma)$ has exactly two odd triangles. If they share an edge, then $(W_4,\Sigma)$ has a weak $K_{2,3}^e$-minor. If they do not share an edge, then $(W_4,\Sigma)$ is sign-equivalent to $W_4^o$.
\end{proof}

\begin{lemma}\label{lem:W4oK4oK4e}
Let $(G,\Sigma)$ be obtained from $W_4^o$ by adding an odd or even edge between nonadjacent vertices. Then $(G,\Sigma)$ has a weak $K_4^o$- or $K_4^e$-minor.
\end{lemma}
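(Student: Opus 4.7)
The plan is a short case analysis after fixing coordinates. Label the rim of $W_4$ cyclically as $v_1v_2v_3v_4$, let $v$ be the hub, and take the odd edges of $W_4^o$ to be $v_1v_2$ and $v_3v_4$. The only nonadjacent pairs of vertices in $W_4$ are the two rim diagonals $\{v_1,v_3\}$ and $\{v_2,v_4\}$, and the automorphism of $W_4^o$ that swaps $v_1\leftrightarrow v_2$ and $v_3\leftrightarrow v_4$ preserves the odd-edge set while interchanging these diagonals, so it is enough to treat the case where the added edge joins $v_1$ and $v_3$. This leaves two sub-cases according to whether the added edge is even or odd.

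The driving observation is that for a signed $K_4$ the sign-equivalence class is completely determined by the number of odd triangles. Indeed, re-signing at a single vertex of $K_4$ flips the three edges at that vertex, and each triangle of $K_4$ contains either $0$ or $2$ of these three edges, so every triangle keeps its parity; the count of odd triangles can therefore only be $0$, $2$, or $4$, corresponding respectively to the sign-equivalence classes of $K_4^e$, $K_4^i$, and $K_4^o$. To exhibit a weak $K_4^o$- or $K_4^e$-minor it is therefore enough to exhibit a $K_4$-minor all four of whose triangles have the same parity.

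In each sub-case I would contract a spoke not incident to $v_1$ or $v_3$, for example the even spoke $vv_2$, and write $u$ for the merged vertex. The result is a $K_4$ on $\{u,v_1,v_3,v_4\}$ with exactly one parallel pair of edges at $uv_1$, one copy arising from the even spoke $vv_1$ and the other from the odd rim edge $v_1v_2$; deleting one copy or the other lets me fix the parity of $uv_1$ as I please. In the even-diagonal sub-case, a quick parity count shows that choosing $uv_1$ odd makes all four triangles of the resulting simple $K_4$ odd, so the minor is sign-equivalent to $K_4^o$. In the odd-diagonal sub-case the analogous construction — possibly after substituting a different contraction (for instance an even rim edge rather than a spoke) or re-signing once at one of the remaining vertices before contracting — produces a $K_4$-minor all of whose triangles agree in parity, giving the desired $K_4^e$- or $K_4^o$-minor. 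The delicate step, and the one I expect to be the main obstacle, is precisely this parity bookkeeping in the odd sub-case: a naive copy of the even-case argument yields only two odd triangles (i.e.\ the $K_4^i$ class), so some care is needed in choosing which edge to contract and which parallel copy to keep so that the final $K_4$ lands in the correct sign-equivalence class.
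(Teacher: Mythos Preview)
Your treatment of the even-diagonal sub-case is correct and makes explicit what the paper merely asserts: contracting the spoke $vv_2$ and keeping the odd copy of the parallel edge at $uv_1$ yields a signed $K_4$ with all four triangles odd, hence a weak $K_4^o$-minor. This matches the paper's claim in the even case.

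The odd-diagonal sub-case, however, is not a matter of more careful bookkeeping; the obstacle you anticipate is fatal. With your labelling (rim $v_1v_2v_3v_4$, hub $v$, odd rim edges $v_1v_2$ and $v_3v_4$, and an \emph{odd} added edge $v_1v_3$), the underlying graph is $K_5 - v_2v_4$. Every $K_4$-minor of it arises either by deleting $v_2$ or $v_4$, or by contracting one of the six edges $vv_2$, $vv_4$, $v_1v_2$, $v_2v_3$, $v_3v_4$, $v_4v_1$ and choosing a parity on each resulting parallel class. A direct check of these finitely many signed $K_4$'s (there are twelve in all) shows that each has exactly two odd triangles, so every one is sign-equivalent to $K_4^i$. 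Thus $W_4^o$ with an odd diagonal has \emph{no} weak $K_4^e$- or $K_4^o$-minor, and the lemma as stated fails in that sub-case. The paper's proof simply asserts a weak $K_4^e$-minor there without exhibiting one, so it shares the same gap; your hedged plan cannot be completed because there is nothing to complete it to.
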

\begin{proof}
Let $v_1, v_2, v_3, v_4$ be the vertices on the rim of $W_4$ in this order.
Up to symmetry there is only one possibility to add an edge between two nonadjacent vertices in $W_4$. We may assume that we add the edge $e$ between $v_1$ and $v_3$.  If $e$ is an even edge, then the resulting signed graph has a weak $K_4^o$-minor. If $e$ is an odd edge, then the resulting graph has a weak $K_4^e$-minor. 
\end{proof}

%

\begin{lemma}\label{lem:W4ocontract}
Let $(G,\Sigma)$ be a signed graph which has an edge whose contraction yields $W_4^o$. Then $(G,\Sigma)$ has a weak $K_4^o$- or  $K_4^e$-minor.
\end{lemma}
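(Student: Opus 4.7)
Let $e=uv$ be the edge of $G$ with $G/e=W_4^o$, and let $w$ denote the image of $e$ in $W_4^o$. Since $W_4^o$ has no parallel edges, each neighbor of $w$ in $W_4^o$ is adjacent in $G$ to exactly one of $u,v$, so $N_{W_4^o}(w)$ is partitioned between $u$ and $v$. I would proceed by a case analysis on whether $w$ is the hub or a rim vertex of $W_4^o$, and on this partition.

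\textbf{Substantive cases.} When $w$ is the hub, the four rim vertices partition between $u$ and $v$. Up to the symmetries of $W_4^o$ and swapping $u\leftrightarrow v$, the two partitions in which both $u$ and $v$ receive at least two external neighbors are: (a) $u$ takes two rim-consecutive vertices, so $G$ is the \emph{prism} of Figure~\ref{fig:C6compl}; or (b) $u$ takes two rim-opposite vertices, so $G$ is underlyingly $K_{3,3}$. The other partitions, and the case where $w$ is a rim vertex, force one of $u,v$ to have at most one external neighbor, and I would reduce these to (a) or (b) by first contracting an edge incident to that low-degree vertex.

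\textbf{Prism.} Normalize so that $v_2v_3$ and $v_4v_1$ are the odd rim edges of $W_4^o$, $v_1v_2$ and $v_3v_4$ are the even rim edges, and all spokes of $W_4^o$ are even. If $e$ is even, I would contract the odd rim edge $v_2v_3$: the resulting signed $W_4$ has exactly one odd triangle, and Lemma~\ref{lem:W4signequivalent} supplies a $K_4^e$-minor. If $e$ is odd, I would contract the even rim edge $v_1v_2$, creating a parallel pair of even edges between $u$ and the new vertex; contracting one of these yields $K_4$ in which the three edges at the merged vertex are odd and the opposite triangle is even, and re-signing at the merged vertex turns this into $K_4^e$. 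For the $K_{3,3}$ case the argument is analogous: an appropriate rim contraction produces a signed $W_4$ with three odd triangles, and Lemma~\ref{lem:W4signequivalent} then yields a $K_4^o$-minor.

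\textbf{Main obstacle.} The parity of $e$ is not fixed by the hypothesis $G/e=W_4^o$, so each substantive sub-case must be handled separately for $e$ even and $e$ odd. The delicate step is tracking signs through contractions that produce parallel edges of mixed parity: the choice of which parity to retain as the single residual edge controls whether the resulting $K_4$ is sign-equivalent to $K_4^e$ or to $K_4^o$.
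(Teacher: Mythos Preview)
Your plan correctly isolates the prism and $K_{3,3}$ as the two six-vertex graphs with minimum degree $\geq 3$ that contract to $W_4$, and your treatment of the prism is essentially right. The gap is in the remaining cases. When one of $u,v$ has at most one external neighbor, that vertex has degree at most $2$ in $G$, and contracting \emph{any} edge incident to it produces a graph on five vertices --- a signed $W_4$, not a prism or $K_{3,3}$ --- so there is no reduction to (a) or (b) along the lines you describe. Worse, if the two edges at the degree-$2$ vertex have the same sign (a re-signing-invariant condition), contracting either one returns $W_4^o$ itself and you have made no progress at all. The paper therefore handles this situation directly rather than by reduction: if $G$ has a vertex of degree two, then $G$ is a one-edge subdivision of $W_4$, and one re-signs at the subdivision vertex to force the desired parity on the edge to be contracted, then checks (according to whether the subdivided edge of $W_4$ is an even rim edge, an odd rim edge, or a spoke) that the resulting signed $W_4$ contains a weak $K_4^e$- or $K_4^o$-minor.

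Your $K_{3,3}$ sketch is also too loose. Contracting a rim edge does yield a signed $W_4$, but the number of odd triangles depends on both the parity of $e$ and which rim edge you contract; for instance with $e$ even and an odd rim edge contracted you obtain a single odd triangle (hence a $K_4^e$-minor via Lemma~\ref{lem:W4signequivalent}), not three. The paper's argument for $K_{3,3}$ is different and shorter: it simply deletes $e$ and locates the required minor inside the remaining signed $K_{3,3}-e$.
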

\begin{proof}
If $G$ has a vertex of degree two, then $G$ arises from $W_4$ by inserting a new vertex on an edge $e$ of $W_4$, which results in a path $P$ of length two in $G$. If $e$ is an even edge on the rim of $W_4$, then, possibly after re-signing, we contract all but one odd edge of $P$. The resulting signed graph contains a weak $K_4^o$-minor.
If $e$ is an odd edge on the rim of $W_4$, then, possibly after re-signing, we contract all but one even edge of $P$. The resulting signed graph contains a weak $K_4^e$-minor.
If $e$ is a spoke of $W_4$, then, possibly after re-signing, we contract all but one odd edge of $P$. The resulting signed graph contains a weak $K_4^o$-minor.

So we may assume that $G$ has no vertex of degree two. Then $G$ is isomorphic either to the prism, $C_6^c$ (see Figure~\ref{fig:C6compl}), or to $K_{3,3}$.  
Let $e$ be the edge in $G$ such that contracting $e$ yields $W_4$. 

If $G$ is isomorphic to $K_{3,3}$, then $(G,\Sigma)-e$ contains a weak $K_4^o$-minor.
Suppose next that $G$ is isomorphic to the prism. Then $(G,\Sigma)$ has either two odd triangles or two even triangles.
If $(G,\Sigma)$ has two odd triangles, then, possibly after re-signing, we contract $e$ and another edge not on the two odd triangles, and obtain a signed graph that contains a $K_4^o$ as a signed subgraph. If $(G,\Sigma)$ has two even triangles, then, possibly after re-signing, we contract $e$ and another edge not on the two even triangles, and obtain a signed graph that contains a $K_4^e$ as a signed subgraph.
\end{proof}

\begin{figure}[h]
\begin{center}
\includegraphics[width=0.5\textwidth]{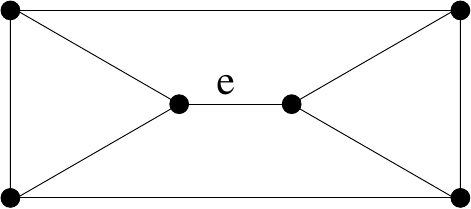}
\end{center}
\caption{The prism with the edge $e$}\label{fig:C6compl}
\end{figure}

\begin{lemma}\label{lem:W4oweak}
Let $(G,\Sigma)$ be a signed graph. If $G$ has a $W_4$-minor,  then at least one of the following holds:
\begin{enumerate}
\item $(G,\Sigma)$ has a weak $K_4^o$-, $K_4^e$-, or $K_{2,3}^e$-minor, or
\item the edges in each parallel class of $(G,\Sigma)$ have the same parity and, after removing from each parallel class all but one edge, we obtain $W_4^o$.
\end{enumerate}
\end{lemma}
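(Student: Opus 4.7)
The plan is to prove the contrapositive: assume $(G,\Sigma)$ has no weak $K_4^e$-, $K_4^o$-, or $K_{2,3}^e$-minor and deduce (2); we may assume $G$ is connected, so all of $V(G)$ is covered by the $W_4$-minor branch sets. Since $G$ has a $W_4$-minor, choose pairwise disjoint connected branch sets $B_1,\ldots,B_5\subseteq G$ (one per vertex of $W_4$) together with one cross-edge between $B_i,B_j$ for each edge $v_iv_j$ of $W_4$, minimally: each $B_i$ is a tree and every leaf of $B_i$ is an endpoint of some cross-edge, while the $B_i$ cover $V(G)$. Let $H$ be the resulting subgraph; contracting each $B_i$ yields a signed $W_4$ that is a weak minor of $(G,\Sigma)$, and so, by Lemma~\ref{lem:W4signequivalent}, is sign-equivalent to $W_4^o$. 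Re-signing $(G,\Sigma)$, we may take this contraction to equal $W_4^o$ literally.

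The crux is to show each $B_i$ is a single vertex. Suppose some $B_i$ contains an edge $uv$; by leaf-minimality, each component of $B_i-uv$ contains an endpoint of a cross-edge. Contract every edge of $H$ that lies inside a branch set except $uv$; the resulting signed graph $G^*$ has six vertices, the eight cross-edges (each incident in $G^*$ to one of $u,v$ or to a singleton branch-set vertex), and the edge $uv$. Because $H$ has only one cross-edge per $W_4$-adjacency, contracting $uv$ in $G^*$ introduces no parallel edges and yields $W_4^o$. Lemma~\ref{lem:W4ocontract} therefore places a weak $K_4^o$- or $K_4^e$-minor inside $G^*$, hence inside $(G,\Sigma)$, contradicting our assumption. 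So every $B_i$ is a single vertex, $V(G)=V(W_4)$, and the cross-edges of $H$ form a copy of $W_4^o$ inside $G$.

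To finish, if $G$ has an edge $e$ joining two non-adjacent $W_4$-vertices (necessarily an opposite pair of rim vertices), then $W_4^o\cup\{e\}\subseteq G$, and Lemma~\ref{lem:W4oK4oK4e} gives $G$ a forbidden weak minor, contradiction. So every edge of $G$ joins adjacent $W_4$-vertices, possibly in parallel. Finally, suppose some parallel class between adjacent $v_i,v_j$ contained both an odd and an even edge. Then deleting all but one edge from each parallel class, once keeping an odd edge and once an even edge in this class, would produce two weak minors of $(G,\Sigma)$, each a signed $W_4$ inheriting the absence of forbidden minors and therefore sign-equivalent to $W_4^o$ by Lemma~\ref{lem:W4signequivalent}; yet they differ only in the parity of the $v_iv_j$-edge, hence in their odd-cycle sets, so they cannot both be sign-equivalent to $W_4^o$. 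This contradiction shows each parallel class has uniform parity, and that parity must match $W_4^o$'s parity on $v_iv_j$ (since the cross-edge of $H$ already lies in this class). Pruning parallels therefore yields $W_4^o$, which is condition (2). The main obstacle is the branch-set collapse of the second paragraph: the minimality conditions must be tuned so that the auxiliary graph $G^*$ built by contraction genuinely satisfies the hypothesis of Lemma~\ref{lem:W4ocontract}.
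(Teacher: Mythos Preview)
Your overall plan---extract a $W_4$-model, collapse branch sets using Lemma~\ref{lem:W4ocontract}, then dispose of diagonal edges via Lemma~\ref{lem:W4oK4oK4e} and of mixed-parity parallel classes via Lemma~\ref{lem:W4signequivalent}---invokes exactly the three lemmas the paper uses, and your third paragraph is correct and matches the paper's treatment of the case where the simple underlying graph is already $W_4$.

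The genuine gap is precisely the one you flag at the end. You assert that one can choose branch sets which simultaneously (a) cover all of $V(G)$ and (b) are leaf-minimal (every leaf of each tree $B_i$ is a cross-edge endpoint). These two conditions are not compatible in general. Take $G$ to be $W_4$ with a pendant vertex $w$ attached to a rim vertex $v_1$, signed so that the $W_4$ is $W_4^o$: any covering model must put $w$ in the branch set containing $v_1$, and then $w$ is a leaf touching no cross-edge. Without (b) your second paragraph fails exactly where you feared: the auxiliary graph $G^*$ can then have a vertex of degree~$1$, and Lemma~\ref{lem:W4ocontract} does not apply to such graphs (indeed $W_4^o$ plus a pendant has no weak $K_4^o$- or $K_4^e$-minor, so there is nothing to extract). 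This example actually shows the lemma is false as stated without a connectivity hypothesis---a point the paper also glosses over---but even under the $2$-connectivity in which the lemma is applied, getting (a) and (b) simultaneously would need its own argument, and you have not supplied one.

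The paper sidesteps this by arguing with a minimal counterexample instead of a covering model: once the simple underlying graph of $G$ is not $W_4$, it locates a minor $(H,\Omega)$ of $(G,\Sigma)$ whose simple underlying graph is still not $W_4$ but which becomes $W_4^o$ after deleting or contracting a \emph{single} edge, and then Lemmas~\ref{lem:W4oK4oK4e} and~\ref{lem:W4ocontract} finish the two cases directly. This never requires accounting for all vertices of $G$ inside the model.
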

\begin{proof}
Suppose for a contradiction that there exists a signed graph $(G,\Sigma)$ such that $G$ has a $W_4$-minor, but $(G,\Sigma)$ has no weak $K_4^o$-, $K_4^e$-, or $K_{2,3}^e$-minor and it is not the case that the edges in each parallel class of $(G,\Sigma)$ have the same parity and, after removing from each parallel class all but one edge, we obtain $W_4^o$. We take $(G,\Sigma)$ with $|V(G)|+|E(G)|$ as small as possible.

If the underlying simple graph of $G$ is isomorphic to $W_4$, then there must be parallel edges of different parity in $(G,\Sigma)$. In this case $(G,\Sigma)$ has a weak $K_4^o$-, $K_4^e$-, or $K_{2,3}^e$-minor by Lemma~\ref{lem:W4signequivalent}. So we may assume that the underlying simple graph of $G$ is not isomorphic to $W_4$. Since $G$ has a $W_4$-minor and $G$ is connected, there exists a signed graph $(H,\Omega)$ such that $(H,\Omega)$ is a minor of $(G,\Sigma)$, the underlying simple graph of $H$ is not isomorphic to $W_4$, and $W_4^o$ can be obtained from $(H,\Omega)$ by deleting or contracting one edge.
If $W_4^o$ can be obtained from $(H,\Omega)$ by deleting an edge $e$, then $e$ connects nonadjacent vertices of $W_4^o$. In this case, $(H,\Omega)$ has a weak $K_4^o$- or $K_4^e$-minor by Lemma~\ref{lem:W4oK4oK4e}, and hence $(G,\Sigma)$ has a $K_4^o$- or $K_4^e$-minor. If $W_4^o$ can be obtained from $(H,\Omega)$ by contracting an edge $e$, then $(H,\Omega)$ has a weak $K_4^o$- or $K_4^e$-minor by Lemma~\ref{lem:W4ocontract}, and hence $(G,\Sigma)$ has  a weak $K_4^o$- or $K_4^e$-minor.
\end{proof}

\section{Partial wide $2$-path}


In this section, we first make some new definitions. A \emph{sided $2$-path} is defined recursively as follows:
\begin{enumerate}
\item Let $T$ be a triangle and let $\mathcal{F}$ be a set of two distinct edges in this triangle. Then $(T,\mathcal{F})$ is a sided $2$-path.
\item If $(G,\mathcal{F})$ is a sided $2$-path and $H$ is obtained from $G$ by adding edges parallel to the edges in $\mathcal{F}$, then $(H,\mathcal{F})$ is a sided $2$-path.
\item Let $(G,\mathcal{F})$ be a sided $2$-path and let $e$ and $f$ be distinct edges in a disjoint triangle $T$. If $H$ is obtained from $G$ by identifying the edge $f$ of $T$ with an edge $h$ in $\mathcal{F}$, then $(H,(\mathcal{F}\setminus \{h\})\cup \{e\})$ is a sided $2$-path.
\end{enumerate}
The edges in $\mathcal{F}$ are called the sides of the sided $2$-path.
A \emph{$2$-path} is a graph $G$ for which there exists a set $\mathcal{F}$ of two distinct edges of $G$ such that $(G,\mathcal{F})$ is a sided $2$-path.
A \emph{partial $2$-path} is a subgraph of a $2$-path. 
A $2$-connected partial $2$-path with no parallel edges is the same as a linear singly edge articulated cycle graph (LSEAC), a type of graph introduced by Johnson et al. \cite{MR2549052}, and it is the same as a linear 2-tree, a type of graph introduced by Hogben and van der Holst \cite{MR2312322}.

\begin{lemma}
Let $G$ be a $2$-connected graph with no $K_4$-, $K_{2,3}$-, or $K_3^=$-minor. Then $G$ is a partial $2$-path.
\end{lemma}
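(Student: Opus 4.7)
I would proceed by induction on $|V(G)|+|E(G)|$, invoking the standard structural fact that a $2$-connected multigraph with no $K_4$-minor is series-parallel, and therefore admits either a vertex of degree $2$ or a pair of vertices joined by $\geq 2$ parallel edges. The base case, $|V(G)|\le 3$, is routine: the no-$K_3^=$-minor hypothesis prevents a triangle from carrying a doubled edge, so $G$ is a submultigraph of a triangle or of a digon, each of which is trivially a partial $2$-path.

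For the inductive step, split into two cases according to the structural dichotomy. If $G$ has parallel edges $e,f$ between vertices $u,v$, form $G'=G-e$. When $G'$ is still $2$-connected, induction yields a $2$-path $T'\supseteq G'$, and the no-$K_3^=$-minor hypothesis on $G$ is exactly what forces $uv$ to be a \emph{side} of some such $T'$; clause (2) in the definition of sided $2$-path then lets us reinstate $e$. When $G'$ is not $2$-connected, $\{u,v\}$ is a $2$-separator and $G$ decomposes as $G_1\cup G_2$ with $V(G_1)\cap V(G_2)=\{u,v\}$; each piece (with the edge $uv$ present) is $2$-connected and inherits the minor-avoidance, so we apply induction to each and glue the resulting partial $2$-paths along $uv$ using the recursion. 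If instead $G$ has a vertex $v$ of degree $2$ with neighbors $u,w$, contract $uv$ to obtain $G'$, which is $2$-connected, strictly smaller, and still has no $K_4$-, $K_{2,3}$-, or $K_3^=$-minor. By induction $G'\subseteq T'$ for some $2$-path $T'$; reinserting $v$ corresponds either to gluing an extra triangle on a side of $T'$ (when $uw\in E(G)$, using clause (3)) or to subdividing a side edge of $T'$ by a triangle (when $uw\notin E(G)$), both of which leave a $2$-path containing $G$.

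The main obstacle is arranging the required \emph{flexibility} of the containing $2$-path: at every gluing step one needs $T'\supseteq G'$ with a \emph{prescribed} edge appearing as a side. I would handle this by strengthening the induction hypothesis to the statement: for every $2$-connected multigraph $H$ satisfying the three minor-avoidance conditions and every edge $e$ of $H$, some $2$-path $T\supseteq H$ has $e$ as one of its two sides. The no-$K_{2,3}$-minor hypothesis is what supplies this flexibility, since any obstruction to placing a nominated edge on a side corresponds, via the triangle-gluing structure of $T$, to three triangles of $T$ sharing a common edge that belongs to $G$, and contracting/deleting outside these triangles exhibits $K_{2,3}$ as a minor of $G$. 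With this strengthened hypothesis in hand, the three case analyses above close the induction cleanly.
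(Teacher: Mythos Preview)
Your strengthened induction hypothesis is false, and this is a genuine gap, not a detail. Take $H$ to be the linear $2$-tree on vertices $1,\dots,5$ with triangles $123$, $234$, $345$, and take $e$ to be the chord $23$. In any sided $2$-path $(T,\mathcal{F})$, an edge enters $\mathcal{F}$ only as an edge of a newly glued triangle and leaves $\mathcal{F}$ as soon as a triangle is glued onto it; consequently every side of $T$ lies in exactly one triangle of the underlying simple graph $\tilde T$. Since any $T\supseteq H$ contains both triangles $123$ and $234$, the edge $23$ lies in two triangles of $\tilde T$ and can never be a side. So your proposed strengthening ``every edge of $H$ can be made a side of some containing $2$-path'' already fails here. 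Your heuristic justification (``the obstruction is three triangles through a common edge, hence a $K_{2,3}$-minor'') is off by one: the actual obstruction is \emph{two} triangles through the edge, which yields no forbidden minor.

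This is not just a flaw in the strengthening; it breaks the inductive step itself. Let $G$ be the $3$-triangle path above together with one extra edge parallel to $23$. One checks directly that $G$ is $2$-connected and has no $K_4$-, $K_{2,3}$-, or $K_3^{=}$-minor (for $K_3^{=}$, note that contracting $G$ down to three vertices leaves at most five edges). Your parallel-edge case sets $G'=G-e=H$ and asks for a $2$-path $T'\supseteq G'$ with $23$ as a side, which we have just seen is impossible; so the reinsertion via clause~(2) cannot be carried out. The no-$K_3^{=}$ hypothesis does not rescue this, contrary to what you assert. (Incidentally, $G$ \emph{is} a partial $2$-path: build $123$ with sides $\{12,23\}$, add the parallel $23$-edge while $23$ is still a side, then glue $234$ and $345$. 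The point is that the right construction does not factor through your reduction.)

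For comparison, the paper avoids this difficulty entirely: it uses the fact that no $K_4$- or $K_{2,3}$-minor forces $G$ to be outerplanar, triangulates the bounded faces, and observes that the weak dual is a tree which must in fact be a path because a degree-$\geq 3$ node in the dual would yield a $K_3^{=}$-minor. This global embedding argument sidesteps the bookkeeping of which edges can serve as sides.
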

\begin{proof}
Since $G$ has no $K_4$- and no $K_{2,3}$-minor, $G$ is outerplanar. 
Hence $G$ can be embedded in the plane such that all its vertices are incident to the infinite face. Add edges such that all finite faces are either triangles or cycles with exactly two edges, and let the resulting graph be $H$. Construct the following tree $R$ . The vertices of $R$ are all finite faces of the plane embedding. Connect two vertices of the tree if the corresponding faces have an edge in common. Then $R$ is a path. For if not, there would be a face that has edges in common with at least three other faces. Such a graph has a $K_3^=$-minor. By induction it can now be shown that $H$ is a $2$-path. Hence $G$ is a partial $2$-path.
\end{proof}

A pair $\{e,f\}$ of nonadjacent edges in $K_4^i$ is called \emph{split} if both $e$ and $f$ belong to an even and an odd triangle.

A \emph{sided wide $2$-path} $[(G,\Sigma),\mathcal{F}]$ is defined recursively as follows:
\begin{enumerate}
\item Let $(G,\Sigma)$ be an even or odd triangle or a $K_4^i$. If $(G,\Sigma)$ is a triangle, let $\mathcal{F}$ be two distinct edges in this triangle. If $(G,\Sigma) = K_4^i$, let $\mathcal{F}$ be a split pair of edges in $K_4^i$. Then $[(G,\Sigma),\mathcal{F}]$ is a sided wide $2$-path.
\item If $[(G,\Sigma),\mathcal{F}]$ is a sided wide $2$-path and $(H,\Omega)$ is obtained from $(G,\Sigma)$ by adding odd and even edges parallel to edges in $\mathcal{F}$, then $[(H,\Omega),\mathcal{F}]$ is a sided wide $2$-path.
\item Let $[(G,\Sigma),\mathcal{F}]$ be a sided wide $2$-path and let $e$ and $f$ be distinct edges in an even or odd triangle $T$. If $(H,\Omega)$ is obtained from $(G,\Sigma)$ by identifying the edge $f$ of $T$ with an edge $h$ in $\mathcal{F}$, then $[(H,\Omega),(\mathcal{F}\setminus \{h\})\cup \{e\}]$ is a sided wide $2$-path.
\item Let $[(G,\Sigma),\mathcal{F}]$ be a sided wide $2$-path and let $\{e,f\}$ be a split pair of edges in $K_4^i$.  If $(H,\Omega)$ is obtained from $(G,\Sigma)$ by identifying the edge $f$ of $K_4^i$ with an edge $h$ in $\mathcal{F}$, then $[(H,\Omega),(\mathcal{F}\setminus \{h\})\cup \{e\}]$ is a sided wide $2$-path.
\end{enumerate}
The edges in $\mathcal{F}$ are called the sides of the sided wide $2$-path.
A \emph{wide $2$-path} is a signed graph $(G,\Sigma)$ for which there exists a set $\mathcal{F}$ of two distinct edges of $(G,\Sigma)$ such that $[(G,\Sigma),\mathcal{F}]$ is a sided wide $2$-path.
A signed graph $(G,\Sigma)$ is a \emph{partial wide $2$-path} if 
it is a spanning subgraph of a wide $2$-path. Observe that if $G$ is a partial $2$-path, then $(G,\Sigma)$ is a partial wide $2$-path.

\begin{lemma}\label{lem:2connpartwide2path}
Let $(G,\Sigma)$ be a $2$-connected signed graph. 
If $(G,\Sigma)$ has no minor isomorphic to $K_4^e$-, $K_4^o$-, $K_{2,3}^e$, or $K_3^=$, then, after removing in each parallel class all but one edge of the same parity, $(G,\Sigma)$ is either isomorphic to $W_4^o$ or $(G,\Sigma)$ is a partial wide $2$-path.
\end{lemma}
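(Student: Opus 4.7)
The plan is to argue by induction on $|V(G)|+|E(G)|$, combined with case analysis on the minor structure of the underlying graph~$G$. First I would observe that every parallel class in $(G,\Sigma)$ must consist of edges of a single parity: a mixed parallel class between $u$ and $v$, together with a $uv$-path supplied by $2$-connectivity, contracts to a $K_3^=$-minor, contradicting the hypothesis. So we may reduce to the case that every parallel class is monochromatic and split on the minor structure of~$G$.

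If $G$ has a $W_4$-minor, Lemma~\ref{lem:W4oweak} applies directly: the forbidden-minor hypotheses rule out its first alternative, so its second alternative holds and, after removing duplicates from each parallel class, $(G,\Sigma)$ is $W_4^o$. If $G$ has a $K_4$-minor but no $W_4$-minor, Lemma~\ref{lem:K4d} produces a wide separation $[G_1,G_2]$ with an odd $4$-cycle $C$ on vertices $r_1,s_1,r_2,s_2$ in cyclic order. Enlarge each side with a virtual chord: let $G_1^+=G_1\cup C\cup\{r_1r_2\}$ and $G_2^+=G_2\cup C\cup\{s_1s_2\}$, and choose the parities of the two chords so that $C$ together with both chords is a $K_4^i$ whose split pair is $\{r_1r_2,s_1s_2\}$; this is possible because $C$ is odd. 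Each $(G_i^+,\Sigma_i^+)$ is strictly smaller than $(G,\Sigma)$, is $2$-connected, and still satisfies the forbidden-minor hypothesis, so by induction each is a partial wide $2$-path with the virtual chord as a side. Gluing the two sided wide $2$-path descriptions along the split pair via item (4) of the sided wide $2$-path definition recovers $(G,\Sigma)$ as a partial wide $2$-path.

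If $G$ has a $K_{2,3}$-minor but no $K_4$-minor, Lemma~\ref{lem:widesepK23o} supplies a wide separation with $G_1 \cong K_2^c$, so only one nontrivial piece remains and the same inductive gluing argument applies, using item (3) of the definition (a triangle) in place of item (4) (a $K_4^i$). If $G$ has neither a $K_4$- nor a $K_{2,3}$-minor, the preceding (unnumbered) lemma in this section shows $G$ is a partial $2$-path; embedding $G$ in a $2$-path $\bar G$ and extending $\Sigma$ arbitrarily to $\bar\Sigma$, iterating items (1)--(3) of the sided wide $2$-path recursion over the triangles of $\bar G$ in order exhibits $(\bar G,\bar\Sigma)$ as a wide $2$-path, so $(G,\Sigma)$ is a partial wide $2$-path.

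The main obstacle I expect is the $K_4$-minor case. One must verify that after inserting the virtual chord, each $(G_i^+,\Sigma_i^+)$ remains $2$-connected and avoids the forbidden weak minors, and that the chord parities can be chosen consistently on both sides so that the split-pair condition for $K_4^i$ really is realized on the glued $C$. Degenerate situations (for instance when $G_1$ is a single edge, or when the virtual chord lands parallel to an existing edge) need separate handling but along the same lines, and the alignment of the two inductively-produced sided wide $2$-path structures along the split pair, needed to invoke item (4), is the subtlest point of the argument.
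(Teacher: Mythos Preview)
Your case analysis and inductive scheme match the paper's, but the construction of the inductive pieces in the $K_4$-minor case has a genuine gap. You set $G_1^+=G_1\cup C\cup\{r_1r_2\}$, including the whole odd $4$-cycle $C$ and adding a single chord between $G_1$'s own attachment vertices. Once $C$ sits inside $G_1^+$, nothing in $G$ outside $G_1\cup C$ reaches $r_1$ or $r_2$, so the virtual chord $r_1r_2$ cannot be simulated by a path in $G$; the claim that $G_1^+$ inherits the forbidden-minor hypothesis is therefore unsupported. Worse, even granting that $G_1^+$ is a partial wide $2$-path, a single chord does not force itself to be a side. Take $G_1$ to be a path $r_1wr_2$ of length two: then every wide $2$-path containing $G_1^+$ as a spanning subgraph is a $K_4^i$ with one triangle attached, and in each such description the sides are $s_1s_2$ (or $ws_i$) together with an edge through $w$ --- never $r_1r_2$. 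So ``with the virtual chord as a side'' fails, and the gluing via item~(4) does not go through; in any case both $G_1^+$ and $G_2^+$ contain $C$, so gluing them na\"{\i}vely double-counts it.

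The paper's construction sidesteps all of this by \emph{not} putting $C$ into the pieces. It forms $H_i$ from $G_i$ alone by adding an odd \emph{and} an even edge in parallel between $G_i$'s attachment vertices. These two virtual edges are simulated in $G$ by the two internally disjoint paths of $C$ between those vertices, which have opposite parities precisely because $C$ is odd; hence $H_i$ really is a weak minor of $G$ and inherits the hypothesis. Moreover, parallel edges in a wide $2$-path can only occur at a side, so the virtual parallel pair forces one side of $H_i$'s enclosing wide $2$-path to lie between the attachment vertices --- exactly what is needed to reassemble $(G,\Sigma)$. The paper also handles separately the subcase where $G_i$ minus its attachment vertices is disconnected, which is where the $K_{2,3}^e$-exclusion is actually used; your proposal flags degenerate cases but does not address this one.
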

\begin{proof}
If $G$ has a $W_4$-minor, then $(G,\Sigma)$ is isomorphic to $W_4^o$, by Lemma~\ref{lem:W4oweak}. 
We may therefore assume that $G$ has no $W_4$-minor.  

Suppose $G$ has a $K_4$-minor. Then, by Lemma~\ref{lem:K4d}, $(G,\Sigma)$ has a wide separation $[G_1, G_2]$.  
For $i=1,2$, let $(H_i,\Omega_i)$ be obtained from $(G_i,E(G_i)\cap\Sigma)$ by adding between the vertices of attachment of $(G_i,E(G_i)\cap\Sigma)$ in the wide  separation $[G_1, G_2]$ an odd and even edge in parallel. Then $(H_i,\Omega_i)$, $i=1,2$, contains no weak minor isomorphic to $K_4^e$, $K_4^o$, $K_{2,3}^e$, $K_3^=$, or $W_4^o$, for otherwise $(G,\Sigma)$ would contain a weak minor isomorphic to $K_4^e$, $K_4^o$, $K_{2,3}^e$, $K_3^=$, or $W_4^o$. 

Let $u$ and $v$ be the vertices of attachment of $(G_1,E(G_1)\cap\Sigma)$ in the wide separation $[G_1.G_2]$.
Suppose $G_1-\{u,v\}$ contains more than one component; let $C_1,\ldots,C_k$ be the components.  Then, as $G$ is $2$-connected, each $G_1[V(C_i)\cup \{u,v\}]$ contains a path of length $\geq 2$ connecting $u$ and $v$. If there are components $C_j$ and $C_k$ such that both $G_1[V(C_j)\cup \{u,v\}]$ and $G_1[V(C_k)\cup\{u,v\}]$ contain an even path connecting $u$ and $v$, then $(G,\Sigma)$ has a $K_{2,3}^e$-minor, a contradiction. Similarly, there are no two components $C_j$ and $C_k$ such that both $G_1[V(C_j)\cup \{u,v\}]$ and $G_1[V(C_k)\cup\{u,v\}]$ contain an odd path connecting $u$ and $v$. Hence $G_1-\{u,v\}$ has exactly two components $C_1$ and $C_2$. If $G[V(C_1)\cup\{u,v\}]$ contains a path $P$ of length $\geq 3$ connecting $u$ and $v$ and $G[V(C_2)\cup \{u,v\}]$ contains an even path connecting $u$ and $v$, then, possibly after re-signing, we contract an edge of $P$ to obtain an even path. Then $(G,\Sigma)$ has a $K_{2,3}^e$-minor. The cases where $G[V(C_2)\cup \{u,v\}]$ has an odd path connecting $u$ and $v$, and where $G[V(C_2)\cup \{u,v\}]$ has a path of length $\geq 3$ connecting $u$ and $v$ are similar. 

If $G[V(C_1)\cup \{u,v\}]$ has parallel edges whose ends are not $u$ and $v$, then $G[V(C_1)\cup \{u,v\}]$ and $G[V(C_2)\cup \{u,v\}]$ contain paths connecting $u$ and $v$ of equal parity.
Then $(G,\Sigma)$ has a $K_{2,3}^e$. A similar statement holds for $G[V(C_2)\cup \{u,v\}]$. Hence $G[V(C_1)\cup\{u,v\}]$ and $G[V(C_2)\cup \{u,v\}]$ are paths of length $2$ and have different parity. Thus $(H_1,\Omega_1)$ is a subgraph of a sided wide $2$-path where one of the parallel edges between $u$ and $v$ is a side.

Suppose $G_1-\{u,v\}$ contains exactly one component.
By induction $(H_1,\Omega_1)$ is a partial wide $2$-path. Since, in the construction of a wide $2$-path, parallel edges appear only parallel to the edges in $\mathcal{F}$ of a sided wide $2$-path and 
$(H_1,\Omega_1)-\{u,v\}$ has exactly one component, there exists a set $\mathcal{F}_1$ of two distinct edges, one of which is between $u$ and $v$, of $(H_1,\Omega_1)$ such that $[(H_1,\Omega_1),\mathcal{F}_1]$ is a sided wide $2$-path.

Similarly there exists a set $\mathcal{F}_2$ of two distinct edge, one of which is between the vertices of attachments of $(G_2,\Sigma_2)$ such that $[(H_2,\Omega_2),\mathcal{F}_2]$ is a sided wide $2$-path. Then $(G,\Sigma)$ is a partial wide $2$-path. We may therefore assume that $G$ has no $K_4$-minor.

Suppose $G$ has a $K_{2,3}$-minor. Then, by Lemma~\ref{lem:widesepK23o}, $(G,\Sigma)$ has a wide separation $[G_1, G_2]$, where $G_1$ is isomorphic to $K_2^c$. Let $(H_2,\Omega_2)$ be obtained from $(G_2,E(G_2)\cap\Sigma)$ by adding between the vertices attachment of $(G_2,E(G_2)\cap\Sigma)$ in the wide  separation $[G_1, G_2]$ an odd and even edge in parallel. Then $(H_2,\Omega_2)$ contains no minor isomorphic to $K_4^e$, $K_4^o$, $K_{2,3}^e$, $K_3^=$, or $W_4^o$. By induction $(H_2,\Omega_2)$ is a partial wide $2$-path. Similar as above, there is a sided wide $2$-path $[(J_2,\Delta_2),\mathcal{F}_2]$ such that $(H_2,\Omega)$ is a subgraph of $(J_2,\Delta_2)$ and one of the edges of $\mathcal{F}_2$ is an edge between the attachments of 
$(G_2,E(G_2)\cap\Sigma)$ in the wide  separation $[G_1, G_2]$. Then $(G,\Sigma)$ is a partial wide $2$-path. We may therefore assume that $G$ has no $K_{2,3}$-minor. 

Since $G$ has no $K_4$-, $K_{2,3}$-, or $K_3^=$-minor, $G$ is a partial $2$-path, and so $(G,\Sigma)$ is a partial wide $2$-path.
\end{proof}

\begin{lemma}\label{lem:2connwide2pathM2}
If $(G,\Sigma)$ is a $2$-connected partial wide $2$-path, then $M(G,\Sigma)\leq 2$.
\end{lemma}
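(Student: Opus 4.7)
The plan is to prove $M(G,\Sigma)\leq 2$ by induction on $|V(G)|$, peeling off the last brick of a sided wide $2$-path construction of a wide $2$-path $(H,\Omega)$ that contains $(G,\Sigma)$ as a spanning subgraph. The base case is $(H,\Omega)$ consisting of a single brick, so that $G$ is a $2$-connected spanning subgraph of $K_3^e$, $K_3^o$, or $K_4^i$; in each such case the bounds $M(K_3^e)=M(K_3^o)=2$ from Lemma~\ref{lem:Kns} and $M(K_4^i)=2$, together with a direct check of the proper $2$-connected subgraphs on at most four vertices, give $M(G,\Sigma)\leq 2$.

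\medskip

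For the inductive step, let $B$ be the last brick attached in the construction, glued along an edge $uv$ to the smaller wide $2$-path $(H',\Omega')$, and set $W:=V(B)\setminus\{u,v\}$, so that $|W|\in\{1,2\}$. Suppose toward a contradiction that there exists $A\in S(G,\Sigma)$ with $\nullity(A)\geq 3$. The key reduction is to form the Schur complement $A/A_{WW}$: when the principal submatrix $A_{WW}$ is invertible, $A/A_{WW}$ is a symmetric matrix on $V(G)\setminus W$ with $\nullity(A/A_{WW})=\nullity(A)$. Because the only neighbors of $W$ in $G$ lie in $\{u,v\}$, the Schur complement agrees with the principal submatrix $A[V(G)\setminus W]$ outside the $\{u,v\}\times\{u,v\}$ block, and any change in the sign of the $(u,v)$ entry can be absorbed by modifying the parity of the side edge $uv$ (or by permitting a parallel edge of the opposite parity at $uv$, which step (2) of the sided wide $2$-path recursion allows).

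\medskip

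Hence $A/A_{WW}$ lies in $S(G'',\Sigma'')$ for a partial wide $2$-path $(G'',\Sigma'')$ on $V(G)\setminus W$ with $|V(G'')|<|V(G)|$. Applying the inductive hypothesis, strengthened to allow \emph{connected} partial wide $2$-paths (with sub-additivity of $M$ at cut vertices covering the case where $G-W$ becomes separable), yields $\nullity(A/A_{WW})\leq 2$, contradicting $\nullity(A)\geq 3$.

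\medskip

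The main obstacle is the degenerate case when $A_{WW}$ is singular, for example $a_{ww}=0$ when $B$ is a triangle brick. There the Schur complement is undefined and one must argue directly on $\ker(A)$: the row-$w$ equation of $Ax=0$ forces $a_{uw}x_u+a_{vw}x_v=0$ for every kernel vector, and combined with the system on the remaining rows this still produces a $3$-dimensional kernel of a suitable symmetric matrix on $V(G)\setminus W$ that lies in $S(G'',\Sigma'')$, yielding the required contradiction. The $K_4^i$-brick case ($|W|=2$) is handled analogously using a two-dimensional Schur complement together with a rank-two correction in the singular sub-case, and there the verification that the parities in the modified $\{u,v\}$ block can be realized inside a partial wide $2$-path uses the structure of the split pair of sides in $K_4^i$ from step (4) of the recursive definition.
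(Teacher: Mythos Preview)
Your approach is genuinely different from the paper's. The paper does not use Schur complements or induction on $|V(G)|$: it argues directly on the kernel. Assuming $\nullity(A)\geq 3$, it picks one nonzero $x\in\ker(A)$ with $x_{s_1}=x_{s_2}=0$, where $s_1s_2$ is a side of the enclosing wide $2$-path, and then propagates zeros brick by brick (using the row of the ``newest'' vertex for a triangle brick, and a short sign argument on the two new rows for a $K_4^i$ brick) until $x=0$, a contradiction. This avoids all degeneracy issues, any need to keep the reduced graph $2$-connected, and any appeal to cut-vertex behaviour of $M$.

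Your outline has a real gap in the degenerate sub-case. When $A_{WW}$ is singular the Schur complement does not exist, and your fallback --- ``the row-$w$ equation \dots still produces a $3$-dimensional kernel of a suitable symmetric matrix on $V(G)\setminus W$'' --- is not justified and, in general, cannot be made to work as stated. With $|W|=\{w\}$ and $a_{ww}=0$, the row-$w$ equation $a_{wu}x_u+a_{wv}x_v=0$ is an extra linear constraint that is not a row of any symmetric matrix on $V(G)\setminus\{w\}$; eliminating $x_w$ from the $u$- and $v$-rows produces an asymmetric system, and a direct count only yields $\nullity\bigl(A[V(G)\setminus\{w\}]\bigr)\geq 2$, not $\geq 3$. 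The $K_4^i$ case is no better. A second gap is the induction hypothesis itself: once the Schur complement zeroes out the $(u,v)$ entry, $G''$ need not be $2$-connected, and you defer to ``sub-additivity of $M$ at cut vertices'', a fact for signed graphs that is not established in this paper and would itself require proof. If you want to rescue the outline, the cleanest fix is to replace the Schur-complement step by precisely the paper's propagation (show that any $x\in\ker(A)$ with $x_u=x_v=0$ also has $x|_W=0$), after which the induction becomes unnecessary.
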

\begin{proof}
Suppose for a contradiction that $M(G,\Sigma)>2$. Then there exists a matrix $A=[a_{i,j}]\in S(G,\Sigma)$ with $\nullity(A)>2$.  Since $(G,\Sigma)$ is a partial wide $2$-path, $(G,\Sigma)$ is a spanning subgraph of a wide $2$-path $(H,\Omega)$. 

If a wide separation $[H_1,H_2]$ in $(H,\Omega)$ does not yield a wide separation $[G[V(H_1)], G[V(H_2)]]$ in $(G,\Sigma)$, then we replace the $K_4^i$ in $(H,\Omega)$ by two adjacent triangles.
We may therefore assume that for each wide separation $[H_1, H_2]$ in $(H,\Omega)$, $[G[V(H_1)], G[V(H_2)]]$ is a wide separation in $(G,\Sigma)$. 
For a vertex $v$ of $(G,\Sigma)$, we denote by $a_v$ the $v$th row of $A$.

Let $s_1,s_2$ be the two ends of an edge $e$ in the set $\mathcal{F}$ of the wide $2$-path $(H,\Omega)$. As $\nullity(A)>2$, there exists a nonzero vector $x\in \ker(A)$ with $x_{s_1} = x_{s_2} = 0$. If $e$ belongs to a triangle, let $f$ be the edge distinct from $e$ in the construction of the wide $2$-path $(H,\Omega)$. Exactly one end $r_1$ of $f$ belongs to $\{s_1,s_2\}$, let's say $r_1=s_1$, while the other end, $r_2$, is adjacent in $(G,\Sigma)$ to $s_2$. From $a_{s_2} x = 0$ it follows that $x_{r_2} = 0$.

If $e$ belongs to a $K_4^i$, then $e$ belongs to an odd and even triangle of $K_4^i$. Let $r_1$ and $r_2$ be the vertices of this $K_4^i$ distinct from $s_1$ and $s_2$. By symmetry, we may assume that the edges $s_1r_1$, $s_2r_1$, and $s_2r_2$ are even and that the edge $s_1r_2$ is odd. Suppose $x_{r_1} > 0$.
From $a_{s_1} x = 0$, it follows that $x_{r_2} > 0$. From $a_{s_2} x = 0$, it follows that $x_{r_1} < 0$. This contradiction shows that $x_{r_1}\leq 0$. In the same way, it is not possible that $x_{r_1} < 0$. Hence $x_{r_1} = 0$. From $a_{s_1} x = 0$, it then follows that $x_{r_2} = 0$. 

Repeating the above shows that $x=0$, which contradicts that $x$ is a nonzero vector in $\ker(A)$. Thus $M(G,\Sigma)\leq 2$.
\end{proof}

We now arrive at our main result.

\begin{thm}
Let $(G,\Sigma)$ be a $2$-connected signed graph. Then the following are equivalent:
\begin{enumerate}[(i)]
\item\label{item:con2equiv1} $M(G,\Sigma)\leq 2$,
\item\label{item:con2equiv2} $\xi(G,\Sigma)\leq 2$,
\item\label{item:con2equiv3} $(G,\Sigma)$  has no minor isomorphic to $K_3^=$, $K_4^e$, $K_4^o$, or $K_{2,3}^e$.
\item\label{item:con2equiv4} $(G,\Sigma)$ is a partial wide $2$-path or is isomorphic to $W_4^o$.
\end{enumerate}
\end{thm}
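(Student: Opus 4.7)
The plan is to close the cycle (i) $\Rightarrow$ (ii) $\Rightarrow$ (iii) $\Rightarrow$ (iv) $\Rightarrow$ (i), using the lemmas already built up in the earlier sections so that almost nothing new needs to be proved.

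First, (i) $\Rightarrow$ (ii) is immediate: for every signed graph one has $\xi(G,\Sigma) \leq M(G,\Sigma)$, as noted in the introduction, so $M(G,\Sigma) \leq 2$ forces $\xi(G,\Sigma) \leq 2$. Next, for (ii) $\Rightarrow$ (iii), I would invoke the weak-minor monotonicity of $\xi$ recorded just before Lemma~\ref{lem:Kns}. Each of the four forbidden minors has $\xi \geq 3$: Lemma~\ref{lem:Kns} gives $\xi(K_4^e) = \xi(K_4^o) = 3$, Lemma~\ref{lem:K23s} gives $\xi(K_{2,3}^e) = 3$, and $\xi(K_3^=) \geq 3$ follows in exactly the same spirit (this is essentially a doubled-edge version of $K_3$; if it is not already stated explicitly earlier, a one-line check exhibiting a rank-zero matrix in $S(K_3^=)$ of the required sign pattern suffices, using that the parallel class with mixed parities imposes no sign restriction). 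Hence the contrapositive: if $(G,\Sigma)$ contained any of $K_3^=$, $K_4^e$, $K_4^o$, $K_{2,3}^e$ as a weak minor, then $\xi(G,\Sigma) \geq 3$, contradicting (ii).

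The implication (iii) $\Rightarrow$ (iv) is precisely the content of Lemma~\ref{lem:2connpartwide2path}, which under the stronger structural hypothesis that $(G,\Sigma)$ has no $K_4^e$-, $K_4^o$-, $K_{2,3}^e$-, or $K_3^=$-minor concludes that, after removing from each parallel class all but one edge, $(G,\Sigma)$ is either isomorphic to $W_4^o$ or is a partial wide $2$-path. One should also remark that the edge-removal is harmless for (iv): deleting a parallel edge that is not the unique representative only yields a spanning subgraph of the wide $2$-path under construction, which again qualifies as a partial wide $2$-path by definition.

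Finally, (iv) $\Rightarrow$ (i) splits into two cases already handled. If $(G,\Sigma) \cong W_4^o$, then Lemma~\ref{lem:MW4o} gives $M(G,\Sigma) = 2$. If $(G,\Sigma)$ is a $2$-connected partial wide $2$-path, then Lemma~\ref{lem:2connwide2pathM2} gives $M(G,\Sigma) \leq 2$. Combining the four steps closes the cycle, yielding the equivalence. There is really no new obstacle in this argument: all the hard work (the sign-parity case analysis for $K_4$- and $K_{2,3}$-minors, the structure theorem, and the upper bound on $M$ for wide $2$-paths) has been isolated into the preceding lemmas, so the proof of the theorem is a short bookkeeping exercise assembling them.
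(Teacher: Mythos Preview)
Your proposal is correct and follows essentially the same cycle (i) $\Rightarrow$ (ii) $\Rightarrow$ (iii) $\Rightarrow$ (iv) $\Rightarrow$ (i) as the paper, invoking the same lemmas (Lemmas~\ref{lem:Kns}, \ref{lem:K23s}, \ref{lem:2connpartwide2path}, \ref{lem:2connwide2pathM2}, \ref{lem:MW4o}) at the same points. Your added remarks on $\xi(K_3^=)$ and on the harmlessness of removing duplicate parallel edges are reasonable elaborations of points the paper simply asserts, but they do not change the structure of the argument.
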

\begin{proof}
Since $\xi(G,\Sigma)\leq M(G,\Sigma)$, it is clear that (\ref{item:con2equiv1}) implies (\ref{item:con2equiv2}).

Suppose $(G,\Sigma)$ is a signed graph with $\xi(G,\Sigma)\leq 2$. Since $\xi(K_3^=) = \xi(K_4^e) = \xi(K_4^o) = \xi(K_{2,3}^e) = 3$, $(G,\Sigma)$ has no minor isomorphic to $K_3^=$, $K_4^e$, $K_4^o$, or $K_{2,3}^e$. Hence (\ref{item:con2equiv2}) implies (\ref{item:con2equiv3}).

Suppose the signed graph $(G,\Sigma)$ has no minor isomorphic to $K_3^=$, $K_4^e$, $K_4^o$, or $K_{2,3}^e$. Then, by Lemma~\ref{lem:2connpartwide2path}, $(G,\Sigma)$ is either isomorphic to $W_4^o$ or $(G,\Sigma)$ is a partial wide $2$-path.

If $(G,\Sigma)$ is a partial wide $2$-path, then, by Lemma~\ref{lem:2connwide2pathM2}, $M(G,\Sigma)\leq 2$. Since also $M(W_4^o)\leq 2$, by Lemma~\ref{lem:MW4o},  (\ref{item:con2equiv4}) implies (\ref{item:con2equiv1}).
\end{proof}



\section{References}

\newcommand{\noopsort}[1]{}

\end{document}